\newtheorem{theorem}{Theorem}[section]
\newtheorem{prop}[theorem]{Proposition}
\newtheorem{lemma}[theorem]{Lemma}
\newtheorem{cor}[theorem]{Corollary}
\newtheorem{thm}{Theorem}
\newtheorem{corr}[thm]{Corollary}
\theoremstyle{definition}
\newtheorem{defn}[theorem]{Definition}
\theoremstyle{remark}
\newtheorem{remark}[theorem]{Remark}
\newtheorem*{convention}{Convention}
\newtheorem*{notation}{Notation}
\newcommand{\cat}{\mathrm{CAT(0)}}
\newcommand{\wt}{\widetilde}
\newcommand{\st}{\mathrm{star}}
\newcommand{\VH}{\mathcal{VH}}
\DeclareMathOperator{\link}{link}
\title{Vertex links and the Grushko decomposition}
\author{Suraj Krishna M S}
\date{\today}
\address{School of Mathematics, Tata Institute of Fundamental Research, Mumbai 400005, India}
\email{suraj@math.tifr.res.in}
\begin{document}

\maketitle
\begin{abstract}
We develop an algorithm of polynomial time complexity to construct the Grushko decomposition of fundamental groups of graphs of free groups with cyclic edge groups. Our methods rely on analysing vertex links of certain CAT(0) square complexes naturally associated with a special class of the above groups. 
Our main result transforms a one-ended CAT(0) square complex of the above type to one whose vertex links satisfy a strong connectivity condition, as first studied by Brady and Meier.
\end{abstract}

\section{Introduction}

The Grushko decomposition theorem \cite{grushko} states that a finitely generated group is a free product of finitely many freely indecomposable non-free groups and a finite rank free group. This decomposition is unique in the sense that the freely indecomposable groups appearing in this decomposition are unique up to reordering and conjugation, and the rank of the free group is invariant.

Given a finitely presented group, there is no algorithm in general to compute its Grushko decomposition. Even when algorithms do exist, they are often not tractable. In this article, we develop an algorithm, with polynomial time complexity, to compute the Grushko decomposition of fundamental groups of graphs of free groups with cyclic edge groups (see \cref{thm_grushko_general_intro}). 

Our algorithm is obtained as a consequence of \cref{cor_grushko_intro}, which gives an algorithm to obtain the Grushko decomposition of fundamental groups of compact nonpositively curved square complexes that we call \emph{tubular graphs of graphs}.
A tubular graph of graphs (see \cref{defn_tubular_graph_graphs} for the precise definition) is a square complex obtained by attaching finitely many tubes (a \emph{tube} is the Cartesian product of a circle and the unit interval) to a finite collection of finite graphs. Tubular graphs of graphs are thus nonpositively curved $\VH$-complexes (in the sense of Wise \cite{wisephd}) in which vertical hyperplanes are homeomorphic to circles (see \cref{section_setup}). 

In \cite{stallings_whitehead_graphs}, Stallings built on work by Whitehead \cite{whitehead} and developed an algorithm that takes a free group and finitely many cyclic subgroups as input and decides whether or not the free group splits freely relative to the cyclic subgroups. We use the machinery of tubular graphs of graphs in this article to develop an alternative algorithm.

In fact, we also obtain an analogue for tubular graphs of graphs of a result by Jaco \cite{jaco_free_product} in 3-manifold theory which states that if the fundamental group of a 3-manifold splits as a free product, then each free factor is itself the fundamental group of a 3-manifold.

Our approach is geometric and in fact we are more interested in nonpositively curved square complexes than groups.
We show how to construct the Grushko decomposition by cutting along contractible subspaces of tubular graphs of graphs which induce free splittings. 

Crucial to our methods is the following key result by Brady and Meier:

\begin{theorem}[\cite{bradymeier}] \label{prop_bradymeier}
Let $X$ be a finite connected nonpositively curved cube complex. Suppose that 
\begin{enumerate}[label=\text{(BM\arabic*)}]
\item \label{BM1} for each vertex $v \in X$, the link of v is connected and
\item  \label{BM2} for each vertex $v \in X$ and each simplex $\sigma$ in $\link(v)$, $\link(v) \setminus \sigma$ is (non-empty and) connected. 
\end{enumerate}
Then $\widetilde{X}$ is one-ended.
\end{theorem}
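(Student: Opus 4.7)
The plan is to prove one-endedness by showing that for a fixed basepoint $\tilde v_0$ and every $n\geq 0$, the complement $\widetilde X\setminus B_n(\tilde v_0)$ of the combinatorial ball of radius $n$ is nonempty, path-connected, and unbounded. Since every compact subset of $\widetilde X$ lies in some $B_n$, any component of its complement other than the one containing $\widetilde X\setminus B_n$ would be bounded, which immediately gives one-endedness.

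For the setup, I would invoke the standard fact about CAT(0) cube complexes: at each vertex $\tilde v$ the edges strictly decreasing distance from $\tilde v_0$ span a single cube, and so correspond to a simplex $\sigma(\tilde v)$ in $\link(\tilde v)$. Identifying $\link(\tilde v)$ with $\link(v)$ via the covering projection, hypothesis \ref{BM2} applied to $\sigma(\tilde v)$ then says the \emph{ascending link} $\link(\tilde v)\setminus\sigma(\tilde v)$ is nonempty and connected (with the basepoint case $\sigma=\emptyset$ handled by \ref{BM1}). Nonemptiness of every ascending link already produces from any vertex an infinite combinatorial ray along which distance strictly increases, which takes care of both nonemptiness and unboundedness of $\widetilde X\setminus B_n$.

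The core of the argument is path-connectedness of $\widetilde X\setminus B_n$. Given $\tilde u,\tilde w\notin B_n$ I would take any combinatorial edge-path $\gamma$ between them, let $m$ be the minimum distance from $\tilde v_0$ it attains, and bypass each vertex $\tilde v$ of $\gamma$ realizing this minimum: by minimality both neighbors $\tilde v_1,\tilde v_2$ of $\tilde v$ on $\gamma$ lie at distance $m+1$, so they are $0$-simplices of the ascending link of $\tilde v$; connectedness of that link yields a finite edge-path in it joining them, corresponding to a sequence of squares at $\tilde v$ each of whose two edges at $\tilde v$ are ascending. Traversing in each such square the two edges that avoid $\tilde v$ concatenates into a detour from $\tilde v_1$ to $\tilde v_2$ visiting only vertices at distance $m+1$ or $m+2$. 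Replacing each minimum-attaining visit by its detour strictly raises the minimum distance along the path, and iterating finitely many times yields a path from $\tilde u$ to $\tilde w$ that avoids $B_n$.

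The main obstacle I anticipate is the distance bookkeeping in the detour: verifying that a link-edge between two ascending $0$-simplices does correspond to a square both of whose $\tilde v$-edges are ascending, and that the opposite corner of such a square lies at distance exactly $m+2$. Both facts reduce to the observation that in a CAT(0) cube complex distance from a basepoint changes by $\pm 1$ across each edge according to which side of the crossed hyperplane contains $\tilde v_0$. Termination of the iterative improvement is automatic since the minimum distance is a bounded integer.
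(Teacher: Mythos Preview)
The paper does not contain a proof of this statement: it is quoted from \cite{bradymeier} and used as a black box throughout, so there is no in-paper argument to compare your proposal against.

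That said, your proposal is correct and is in fact the combinatorial Morse-theory argument that underlies the original Brady--Meier proof. Distance to the basepoint is the Morse function; the key structural input is exactly the one you name, that in a CAT(0) cube complex the descending edges at any vertex span a cube and hence a simplex $\sigma(\tilde v)$ in the link; and (BM1)--(BM2) then make every ascending link nonempty and connected, yielding both infinite ascending rays and the detour step. One point worth making explicit in a full write-up: the set $\link(\tilde v)\setminus\sigma(\tilde v)$ appearing in (BM2) is not itself a subcomplex, but it deformation retracts onto the full subcomplex spanned by the ascending vertices, so its connectedness gives you an \emph{edge}-path through ascending vertices only, which is what your square-by-square detour actually requires. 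Your distance bookkeeping for the far corner of each detour square (distance $m+2$, via the hyperplane description of $\ell^1$-distance) is correct, and since consecutive vertices on an edge-path differ in distance by $\pm 1$, the minimum-attaining vertices are isolated on $\gamma$, so the replacements do not interfere and the induction terminates.
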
  

We say that a cube complex is \emph{Brady-Meier} if it satisfies the conditions \ref{BM1} and \ref{BM2} above. Note that the Brady-Meier conditions are local and hence preserved by covering maps.
The converse of \cref{prop_bradymeier} is not true in general. The main result of the article gives a geometric/combinatorial procedure that modifies a given tubular graph of graphs to a homotopy equivalent tubular graph of graphs which is Brady-Meier if and only if the fundamental group is one-ended:

\begin{thm}[\cref{thm_ends_main}] \label{thm_ends_main_intro}
There is an algorithm of polynomial time complexity which takes a tubular graph of graphs as input and returns a homotopy equivalent tubular graph of graphs which is either a Brady-Meier complex or contains a locally disconnecting vertex which splits the fundamental group as a free product.
\end{thm}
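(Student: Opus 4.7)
The plan is to set up an iterative procedure that inspects vertex links and applies local homotopy equivalences within the class of tubular graphs of graphs until every link satisfies \ref{BM1} and \ref{BM2}, or an unrepairable obstruction is exposed. Since $X$ is a square complex, the link of each vertex $v$ is a graph $\link(v)$ whose vertices correspond to half-edges at $v$ (horizontal, from the constituent graphs $\Gamma_i$, and vertical, from the boundary circles of the attached tubes) and whose edges correspond to corners of squares. Condition \ref{BM1} is ordinary graph connectivity of $\link(v)$, and condition \ref{BM2} amounts to requiring that no vertex and no edge of $\link(v)$ is a cut element; both can be tested in polynomial time by standard biconnectivity algorithms.

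First I would isolate a short list of elementary moves on a tubular graph of graphs $X$ that preserve the homotopy type and the class: collapsing a free face of a square, collapsing a tube whose core circle is null-homotopic in a $\Gamma_i$, sliding a tube boundary across a pendant arc of a $\Gamma_i$, and splitting a vertex along a disconnection of its link. Each move is local, checkable in polynomial time, and I would introduce a complexity function $\Phi(X)$, for example the lexicographic pair (total number of squares, number of vertices where BM fails), such that every move strictly decreases $\Phi$.

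The main loop then proceeds as follows. We traverse the vertex set of $X$, compute each link, and check \ref{BM1} and \ref{BM2}. If every link passes, we return $X$ as a Brady--Meier complex. Otherwise let $v$ be a vertex where the test fails, and analyse the shape of the failure in $\link(v)$. In the repairable case, the disconnection of $\link(v)$ (or of $\link(v) \smallsetminus \sigma$) matches the signature of one of the moves above; we apply the move, strictly decrease $\Phi$, and restart. In the non-repairable case, I would show, after first exhausting every applicable move at $v$, that the remaining disconnection of $\link(v)$ is induced by a global decomposition $X = A \cup_{\{v\}} B$ meeting only at the singleton $\{v\}$; the Seifert--van Kampen theorem then yields $\pi_1(X) = \pi_1(A) \ast \pi_1(B)$ with both factors nontrivial, so $v$ is a locally disconnecting vertex producing a genuine free splitting and the algorithm halts.

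The principal obstacle is certifying this dichotomy. One must verify, by a case analysis on the combinatorics of $\link(v)$ (distinguishing horizontal, vertical, and mixed disconnections, and the two flavours of \ref{BM2} failure at a vertex versus an edge of the link), that every failure either matches a move which does not propagate new failures elsewhere, or else forces a global cut at $v$ with both sides carrying nontrivial fundamental group; a delicate sub-point is ruling out the degenerate situation where one side of the cut is simply connected, which would produce a trivial free decomposition. Once this dichotomy is established, termination is immediate because $\Phi$ is bounded below and strictly decreases at every iteration; the polynomial time bound follows because a single pass through the vertex set, one link test, and one move each cost polynomial time in the size of $X$, and the number of iterations is bounded by the initial value of $\Phi$, itself polynomial in the input.
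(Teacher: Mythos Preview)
Your high-level strategy matches the paper's: iterate a local ``open up a vertex whose link fails \ref{BM2}'' move until either the complex is Brady--Meier or an honest free splitting is exposed. But two points in your plan do not work as written.

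First, and most seriously, your complexity function $\Phi = (\#\text{squares}, \#\text{BM-failing vertices})$ does not decrease under the key move. When you split a vertex $u$ along a disconnecting vertex of $\link(u)$, the number of squares is \emph{unchanged} (the edge graphs and tubes are the same), and there is no reason the number of BM-failing vertices drops: the new vertices $u_1,\dots,u_n$ may themselves fail \ref{BM2}, and the move can even create edges of thickness one, introducing new failures elsewhere. The paper's termination argument is different and is the main point: the opening move keeps the number of squares constant but strictly \emph{increases} the number of vertical edges, and whenever every edge has thickness at least two the number of vertical edges is bounded above by the number of squares. Hence the loop runs at most $n$ times, where $n$ is the number of squares; this is also what gives the polynomial bound.

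Second, your repairable/non-repairable dichotomy is not quite the right one. In the paper, a \ref{BM2} failure is \emph{always} repairable: one first reduces (via the bipartite structure of the link) to the case where a \emph{vertical} vertex of $\link(u)$ disconnects, and then performs the SL-move. The genuinely non-repairable obstructions are (i) an edge of thickness zero or one and (ii) a disconnected link (\ref{BM1} failure); these are detected before or after each SL-move, and the earlier propositions on thickness and link connectivity show that in each such case the complex is a wedge of non--simply-connected pieces, giving the free splitting. Your list of moves also includes items that never arise here (a tube whose core is null-homotopic cannot occur, since attaching maps are immersions of circles into graphs), so the case analysis you anticipate is both narrower and differently organised than you suggest.
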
 

For this and the corollaries below, the running times of the algorithms are polynomial in the number of squares of the input square complex. A major application of the above theorem is in the development of an algorithm to obtain the JSJ decomposition of one-ended hyperbolic fundamental groups of tubular graphs of graphs \cite{meda_jsj}. The main point is that one can work with a Brady-Meier tubular graph of graphs (obtained in polynomial time) when the fundamental group is one-ended, thanks to \cref{thm_ends_main_intro}.

The key step in the construction of our algorithm involves a simplification of the input tubular graph of graphs by `opening-up' at a vertex which does not satisfy \ref{BM2}. This opening-up keeps the number of squares in the complex constant, while simplifying certain vertex links. We call such an opening procedure as an \emph{SL-move} (`SL' stands for simplified link). We thus obtain a partial converse to \cref{prop_bradymeier}:

\begin{corr}\label{thm_partial_converse_brady_meier}
A tubular graph of graphs has a one-ended universal cover if and only if it can be simplified in finitely many SL-moves to a Brady-Meier tubular graph of graphs with isomorphic fundamental group.
\end{corr}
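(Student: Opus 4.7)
The plan is to derive the corollary from \cref{thm_ends_main_intro} together with the Brady--Meier theorem (\cref{prop_bradymeier}), which already do most of the work.

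For the \emph{if} direction, I would argue directly: suppose $X$ admits a finite sequence of SL-moves reducing it to a Brady--Meier tubular graph of graphs $X'$ with $\pi_1(X) \cong \pi_1(X')$. Since SL-moves preserve the homotopy type, the universal covers $\widetilde{X}$ and $\widetilde{X'}$ are homotopy equivalent and in particular have the same number of ends. By \cref{prop_bradymeier}, $\widetilde{X'}$ is one-ended, hence so is $\widetilde{X}$.

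For the \emph{only if} direction, I would assume $\widetilde{X}$ is one-ended and run the algorithm of \cref{thm_ends_main_intro} on $X$. By construction this algorithm performs a finite (polynomially bounded) sequence of SL-moves and outputs a homotopy equivalent tubular graph of graphs $X'$ which is either Brady--Meier or contains a locally disconnecting vertex inducing a free splitting of $\pi_1(X')$. I then need to rule out the second possibility under the one-endedness hypothesis. Such a vertex would express $\pi_1(X) \cong \pi_1(X')$ as a nontrivial free product $A * B$; because $X'$ is a nonpositively curved cube complex it is aspherical and $\pi_1(X)$ is torsion-free, so both $A$ and $B$ are torsion-free and nontrivial. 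By Stallings' theorem on ends of groups (or a direct Bass--Serre argument for free products) this forces $\pi_1(X)$ to have infinitely many ends, contradicting the one-endedness of $\widetilde{X}$. Hence the algorithm must terminate with a Brady--Meier complex, which gives the required sequence of SL-moves.

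The substantive content is already encoded in \cref{thm_ends_main_intro}; the main subtlety for this corollary is checking that every simplification step performed by the algorithm truly is an SL-move as defined (not merely an arbitrary homotopy equivalence) and that only finitely many such moves are needed. Both points, however, are built into the definition of an SL-move and the construction of the algorithm: each move is applied at a vertex failing \ref{BM2}, keeps the number of squares constant, and strictly simplifies certain vertex links, giving a monovariant that guarantees termination. Once these ingredients are in place, the corollary follows immediately from the dichotomy in \cref{thm_ends_main_intro}.
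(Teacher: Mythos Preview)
Your argument follows the paper's intended route: derive both directions from \cref{thm_ends_main_intro} (equivalently \cref{thm_ends_main}) together with \cref{prop_bradymeier}. Two points deserve correction.

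First, you invoke Stallings' theorem on ends to rule out the free-product outcome. The paper deliberately avoids this (see \cref{thm_stallings_algorithm}); it uses instead the elementary result of Hopf recorded as \cref{cor_freee_amalgam_ends}, together with \cref{rmk_wedge_not_one_ended}. Since part of the point of the paper is that the argument is independent of Stallings' theorem, you should cite \cref{cor_freee_amalgam_ends} here rather than Stallings.

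Second, your claim that ``every simplification step performed by the algorithm truly is an SL-move'' is not accurate as written. Steps~1 and~2 of the algorithm in \cref{thm_ends_main} collapse hanging trees and remove rudimentary tubes; these are homotopy equivalences but not SL-moves, and a tubular graph of graphs with one-ended universal cover can certainly have hanging trees or rudimentary edges. What is true is that these preprocessing steps occur only once, before any SL-move, and that under the one-endedness hypothesis the algorithm never exits via Steps~3--5 (since those exits produce a point or a wedge-like complex, contradicting one-endedness by \cref{rmk_wedge_not_one_ended}); thereafter every pass through Step~7 is an SL-move. So the honest statement is that after an initial homotopy equivalence to a complex with no hanging trees or rudimentary edges, only SL-moves are needed. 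Either note this explicitly, or read the corollary (as the paper seems to) with that preliminary normalisation understood.
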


We also obtain the Grushko decomposition of these groups using \cref{thm_ends_main_intro}. In fact, we obtain a stronger result:

\begin{corr} [\cref{thm_actual_ends_algo}] \label{cor_grushko_intro}
There is an algorithm of polynomial time complexity which takes as input a tubular graph of graphs and returns a homotopy equivalent tubular graph of graphs obtained by gluing together certain vertices of a finite collection of Brady-Meier tubular graphs of graphs and finite graphs. 
Further, the free product decomposition induced by cutting along the glued vertices is the Grushko decomposition of the fundamental group of the input tubular graph of graphs.  
\end{corr}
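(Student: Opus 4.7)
The plan is to iterate \cref{thm_ends_main_intro} on the pieces produced by each free splitting until every remaining piece is either Brady-Meier or a finite graph. Apply \cref{thm_ends_main_intro} to the input $X$: if the returned tubular graph of graphs $Y$ is Brady-Meier, halt; otherwise $Y$ contains a locally disconnecting vertex $v$ that splits $\pi_1(Y) = A_1 \ast \cdots \ast A_k$, and separating $Y$ at $v$ presents it as the wedge $Y_1 \vee_v \cdots \vee_v Y_k$ with $\pi_1(Y_i) = A_i$. Each $Y_i$ is either a tubular graph of graphs (if it still contains tubes) or a finite graph (otherwise). Recursively apply the algorithm to each tubular piece, leave the finite graph pieces alone, and reassemble the outputs by wedging at the distinguished vertex. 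Because SL-moves are homotopy equivalences and wedging at a single point preserves homotopy equivalence, the final output is homotopy equivalent to $X$.

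For complexity, use the number of squares as a monovariant: SL-moves inside \cref{thm_ends_main_intro} preserve it, while a genuine splitting at a locally disconnecting vertex strictly partitions the squares among the pieces. Hence both the recursion depth and the total number of recursive calls are polynomial in the size of $X$. Combined with the polynomial running time of \cref{thm_ends_main_intro}, the whole procedure is polynomial.

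To identify the output with the Grushko decomposition, observe that a wedge-at-vertices structure exhibits $\pi_1(X)$ as the free product of the fundamental groups of the terminal pieces. Each Brady-Meier terminal piece has one-ended universal cover by \cref{prop_bradymeier}, and its fundamental group is torsion-free because tubular graphs of graphs are nonpositively curved cube complexes acted on freely by their fundamental groups; by Stallings' theorem on ends, such a group is either trivial or one-ended and, when one-ended, freely indecomposable and non-free. Each finite graph piece contributes a finitely generated free group. Collecting the one-ended factors and absorbing all free contributions into a single free group of the right rank produces a free product decomposition of $\pi_1(X)$ into freely indecomposable non-free factors and a free group, which by uniqueness of \cite{grushko} must be the Grushko decomposition.

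The main obstacle will be verifying that the separation operation at a locally disconnecting vertex produces pieces that are again tubular graphs of graphs in the sense of \cref{defn_tubular_graph_graphs}, so that \cref{thm_ends_main_intro} genuinely applies in each recursive call, and confirming that the square-count monovariant strictly decreases along the recursion (in particular, that SL-moves performed in one branch do not reintroduce locally disconnecting vertices in an already-processed branch). Once these bookkeeping points are in place, the polynomial bound and the identification with the Grushko decomposition follow from the preceding two paragraphs.
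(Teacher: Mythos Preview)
Your overall strategy---iterate \cref{thm_ends_main_intro}, cut, recurse---matches the paper's. But there is a genuine gap in how you handle the cut. You assume that a locally disconnecting vertex $v$ presents $Y$ as a wedge $Y_1\vee_v\cdots\vee_v Y_k$ with $k\geq 2$, so that the squares are strictly partitioned among the $Y_i$. ``Locally disconnecting'' only says $\link(v)$ (equivalently $\st(v)\setminus\{v\}$) is disconnected; it does \emph{not} imply $Y\setminus\{v\}$ is disconnected. In the HNN-type situation of \cref{lemma_free_amalgam_hnn}(2), cutting at $v$ returns a single connected piece $Y'$ with $\pi_1(Y)\cong\pi_1(Y')*\mathbb{Z}$ and with exactly the same number of squares as $Y$. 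Your monovariant therefore does not strictly decrease, and your recursion-depth bound collapses. The paper explicitly separates these two cases in the proof of \cref{thm_actual_ends_algo}: after first stripping thickness-one edges (another step you omit), one cuts and either obtains several components or a single component together with an extracted $\mathbb{Z}$ factor.

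Two smaller points. First, the output of the detailed version (\cref{thm_ends_main}) is ``Brady-Meier, a point, or wedge-like''; wedge-like can mean a thickness-one edge rather than a disconnected link, so some preprocessing (Propositions \ref{prop_thickness0}, \ref{prop_thickness1}) is needed before you actually have a vertex to cut along. Second, your appeal to Stallings' theorem is harmless but unnecessary: to see that a one-ended factor is freely indecomposable and non-free you only need the elementary direction (\cref{cor_freee_amalgam_ends}, due to Hopf) that nontrivial free products and nontrivial free groups are not one-ended. The paper makes a point of avoiding Stallings' theorem entirely (\cref{thm_stallings_algorithm}).
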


We point out that our proof neither uses Stallings' theorem on ends of finitely generated groups nor assumes the existence of a Grushko decomposition. In fact, our procedure yields a new proof of Stallings' theorem for fundamental groups of tubular graphs of graphs as well as the existence of a Grushko decomposition for these groups.

The analogue of Jaco's result immediately follows:

\begin{corr}[\cref{cor_subadditivity_sphere_theorem}]\label{cor_subadditivity_intro}
Let $X$ be a tubular graph of graphs with fundamental group $G$. If $G = A * B$, then there exist tubular graphs of graphs $X_1$ and $X_2$ such that $A$ and $B$ are fundamental groups of $X_1$ and $X_2$ respectively. Moreover, $X_1$ and $X_2$ can be so chosen such that the total number of squares in $X_1$ and $X_2$ is bounded by the number of squares in $X$.
\end{corr}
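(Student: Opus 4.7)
The plan is to reduce everything to \cref{cor_grushko_intro}, which already provides a geometric realization of the Grushko decomposition. First I would apply the algorithm of \cref{cor_grushko_intro} to $X$ to produce a homotopy equivalent tubular graph of graphs $Y$ obtained by gluing Brady-Meier tubular graphs of graphs $Y_1, \ldots, Y_n$ and finite graphs $\Gamma_1, \ldots, \Gamma_m$ along certain cut vertices, so that
\[
G \cong \pi_1(Y_1) * \cdots * \pi_1(Y_n) * F_r
\]
is the Grushko decomposition of $G$, where $F_r \cong \pi_1(\Gamma_1) * \cdots * \pi_1(\Gamma_m)$ and each $\pi_1(Y_i)$ is freely indecomposable and non-free.

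Given the hypothesis $G = A * B$, the next step is to use uniqueness of the Grushko decomposition to match factors. Writing Grushko decompositions $A = A_1 * \cdots * A_k * F_s$ and $B = B_1 * \cdots * B_\ell * F_t$, one obtains a Grushko decomposition of $G$ whose freely indecomposable part is $\{A_i\} \cup \{B_j\}$ and whose free rank is $s+t$. Uniqueness then forces $s + t = r$ and a bijection, up to reordering and conjugation, between $\{A_i\} \cup \{B_j\}$ and $\{\pi_1(Y_i)\}_{i=1}^n$. Hence the $Y_i$ partition into two subcollections realizing the non-free factors of $A$ and $B$, respectively.

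Finally, I would define $X_1$ as the wedge at a single vertex of the $Y_i$'s assigned to $A$ together with a wedge of $s$ circles (to realize $F_s$), and $X_2$ analogously for $B$. A wedge of circles is a finite graph and hence a tubular graph of graphs with no tubes, and wedging tubular graphs of graphs at a single vertex yields a tubular graph of graphs, so $X_1$ and $X_2$ lie in the desired class and have fundamental groups $A$ and $B$. For the square count, squares live only inside the $Y_i$'s (finite graphs contribute none), so
\[
\#\text{squares}(X_1) + \#\text{squares}(X_2) = \sum_{i=1}^n \#\text{squares}(Y_i) = \#\text{squares}(Y) \leq \#\text{squares}(X),
\]
the last inequality because the steps of the algorithm (SL-moves and cutting at locally disconnecting vertices) do not increase the number of squares. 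The only genuine content beyond \cref{cor_grushko_intro} is the Grushko-uniqueness matching in the middle step; everything else is bookkeeping, and I do not anticipate a serious obstacle.
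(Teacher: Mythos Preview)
Your proposal is correct and follows exactly the approach the paper takes: the paper's proof of this corollary is the single line ``By the uniqueness of the Grushko decomposition, we have'' after \cref{thm_actual_ends_algo}, and you have simply spelled out the details of that matching together with the easy square-count and wedge bookkeeping.
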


The Grushko decomposition may be found algorithmically in other situations. Jaco, Letscher and Rubinstein \cite{jaco_grushko_jsj_algorithm} gave an algorithm of polynomial time complexity to compute the prime decomposition of a 3-manifold from a triangulation. 
Gerasimov \cite{gerasimov} showed that the Grushko decomposition can be computed for hyperbolic groups. But there is no known bound on the complexity of his algorithm. 
Dahmani and Groves \cite{dahmani_groves_grushko} extended Gerasimov's ideas to groups which are hyperbolic relative to abelian subgroups. 
Diao and Feighn \cite{feighn_grushko} gave an algorithm for graphs of free groups.
Their algorithm relies on certain simplifications of a given graph of free groups which depend on finding Gersten representatives of the incident edge groups in each vertex group, which is algorithmic. More recently, Touikan \cite{touikan_two_torsion} presented an important algorithm which returns the Grushko decomposition of finitely presented groups with solvable word problem and no 2-torsion. However, the time complexity of his algorithm is not known. 

We note that our algorithm is explicit and neither uses the Rips machine nor requires solutions of equations in free groups, as some of the above algorithms do. 

We mention another application of our algorithm, before stating our result for general graphs of free groups with cyclic edge groups. 
As defined by Stallings in \cite{stallings_whitehead_graphs}, a finite set of words $W$ of a finite rank free group $F$ is \textit{separable} if there exists a nontrivial free splitting of $F$ such that each word of $W$ conjugates into a free factor.

Stallings obtained an algorithm to detect separability in \cite{stallings_whitehead_graphs}. In a related result, Roig, Ventura and Weil \cite{whitehead_minimization} obtained a polynomial time algorithm to solve the Whitehead minimization problem and therefore the primitivity problem.
We give an alternate version of Stallings' algorithm using \cref{thm_ends_main_intro}:
\begin{corr}[\cref{thm_separability_algo}] \label{cor_separability_intro}
There exists an algorithm of polynomial time complexity that takes a finite set of words in a finite rank free group as input and decides whether it is separable.
\end{corr}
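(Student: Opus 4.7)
The plan is to reduce the separability question to the one-endedness algorithm of \cref{thm_ends_main_intro}. Given $F = F_n$ realised as $\pi_1(R)$ for the rose $R$ with $n$ petals, and $W = \{w_1, \ldots, w_k\} \subset F$, I would construct a tubular graph of graphs $X$ as follows: take a subdivision of $R$ so that each letter of each $w_i$ is a single edge, and for each $i$ attach a tube $T_i \cong S^1 \times [0,1]$ to $R$ by mapping \emph{both} boundary circles of $T_i$ homeomorphically onto the loop $\gamma_i \subset R$ representing $w_i$. The resulting complex $X$ is a tubular graph of graphs whose combinatorial size is polynomial in $n + \sum_i |w_i|$.

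A Seifert--van Kampen computation shows that each such tube introduces a single stable letter $t_i$ that commutes with $w_i$, so
\[
\pi_1(X) \;\cong\; G := \bigl\langle F,\, t_1, \ldots, t_k \mid [t_i, w_i] = 1,\ 1 \leq i \leq k \bigr\rangle;
\]
equivalently, $G$ is the fundamental group of the graph of groups with central vertex group $F$, peripheral vertex groups $\langle w_i, t_i \rangle \cong \mathbb{Z}^2$, and edge groups $\langle w_i \rangle$. The key reduction is the claim that $G$ splits as a nontrivial free product if and only if $W$ is separable in $F$. The ``if'' direction is straightforward: a free splitting $F = A * B$ with each $w_i$ conjugate into a factor extends to a free splitting of $G$ by absorbing each peripheral $\mathbb{Z}^2$ into the side containing its $w_i$. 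For the converse, suppose $G = G_1 * G_2$ and consider the action of $G$ on the Bass--Serre tree $T'$; each peripheral $\mathbb{Z}^2$ is freely indecomposable and therefore fixes a vertex of $T'$. If $F$ also fixed a vertex, the fact that edge stabilisers of $T'$ are trivial together with $[t_i, w_i] = 1$ would force each $t_i$ into the same vertex stabiliser as $F$, making one of $G_1, G_2$ equal to $G$. Hence $F$ acts nontrivially on $T'$, producing (with trivial edge stabilisers) a nontrivial free splitting of $F$ in which every $w_i$ is elliptic, i.e., a witness to the separability of $W$.

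With this equivalence established, the algorithm is immediate. Feed $X$ into \cref{thm_ends_main_intro} to obtain a homotopy-equivalent tubular graph of graphs $X'$ which is either Brady--Meier or contains a locally disconnecting vertex. In the first case $\widetilde{X'}$ is one-ended by \cref{prop_bradymeier}, so $G$ is one-ended and $W$ is \emph{not} separable; in the second $G$ splits freely, so $W$ \emph{is} separable. The total running time is polynomial because constructing $X$, invoking \cref{thm_ends_main_intro} once, and inspecting its output are each polynomial. The main obstacle I anticipate is making the Bass--Serre argument in the second paragraph airtight across several peripheral $\mathbb{Z}^2$ factors simultaneously, together with a brief treatment of degenerate cases (trivial words, proper powers, and $n \leq 1$) in which either $W$ is trivially separable or $F$ has no nontrivial free splitting at all.
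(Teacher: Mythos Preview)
Your approach is correct but differs from the paper's in two noteworthy ways. First, the paper builds a different tubular graph of graphs: it takes the \emph{double} of the rose along $W$, i.e.\ two copies of $X_s$ joined by $k$ tubes (one for each $w_i$), whereas you attach each tube as a self-loop on a single copy of the rose, producing the multiple HNN extension $G=\langle F,t_1,\dots,t_k\mid [t_i,w_i]=1\rangle$. Second, and more substantively, the paper does not prove the equivalence ``$G$ one-ended $\iff$ $W$ inseparable'' by hand; it simply invokes Wilton's theorem (\cref{thm_wilton_ends}) that a graph of free groups with cyclic edge groups is one-ended iff every vertex group is freely indecomposable relative to the incident edge groups. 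Your Bass--Serre argument replaces this citation with an elementary proof, and it works precisely because your construction manufactures the $\mathbb{Z}^2$ subgroups $\langle w_i,t_i\rangle$: their free indecomposability forces them to be elliptic in any free splitting of $G$, and then the commutation $[t_i,w_i]=1$ together with triviality of edge stabilisers pins everything down. Note that this argument would \emph{not} transfer to the paper's double, which contains no $\mathbb{Z}^2$ subgroups, so the two routes are genuinely different rather than cosmetic variants. The trade-off is clear: the paper's proof is two lines long but imports a nontrivial external theorem; yours is self-contained but, as you anticipate, requires some care with the minimal $F$-subtree (ensuring the induced splitting of $F$ is nontrivial and that each elliptic $w_i$ lands in a vertex group, which it does since edge stabilisers are trivial) and with the degenerate cases you list.
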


Stallings obtains his algorithm by constructing a Whitehead graph for the given set of words in a chosen basis. He then uses a Whitehead automorphism to modify the basis whenever there is a cut vertex in the Whitehead graph to reduce the total length of the given set of words. We first give a new proof of Whitehead's cut vertex theorem (\cref{prop_stallings_separability}). We then obtain our algorithm by first constructing the tubular graph of graphs associated to a `double' of the free group with the given set of words. We then apply the algorithm of \cref{thm_ends_main_intro}. 

By combining a result of Wilton \cite{wilton_one-ended_groups} (see \cref{thm_wilton_ends}) with \cref{cor_grushko_intro}, we obtain the advertised result:

\begin{thm}[\cref{thm_grushko_general}] \label{thm_grushko_general_intro}
There exists an algorithm of polynomial time complexity which takes a graph of free groups with cyclic edge groups as input and returns as output the Grushko decomposition of its fundamental group.
\end{thm}
The running time of the algorithm is polynomial in the sum of the lengths of the words induced by the generators of the edge groups in the respective vertex groups.

\textit{Acknowledgements.} I thank my advisors Fr\'ed\'eric Haglund and Thomas Delzant for suggesting the problem and for several important discussions. Part of this work was done when I was visiting IRMA, Strasbourg. This work was supported by a French public grant for research, as part of the Investissement d'avenir project, reference ANR-11-LABX-0056-LMH, LabEx LMH. I am grateful to the referee for their suggestions to improve the paper.

\section{The setup} \label{section_setup}

\subsection{\texorpdfstring{$\VH$}{VH}-complexes}

The notion of $\VH$-complexes was first introduced in \cite{wisephd}.

\begin{defn} A \textit{square complex} is a two dimensional CW complex in which each 2-cell is attached to a combinatorial loop of length 4 and is isometric to the standard Euclidean unit square $I^2 = [0,1]^2$.
\end{defn} 

All our square complexes will be locally finite.

\begin{defn}[Vertex links] Let $v\in X$ be a vertex of a square complex. The link of $v$, denoted by $\link(v)$ is a graph whose vertex set is the set $\{e \mid e \mathrm{\: is \: a \: half\mbox{-}edge \: incident \: to \:} v\}$. The number of edges between two vertices $e, f$ is the number of squares of $X$ in which $e,f$ are adjacent half-edges.
\end{defn}

\begin{defn} A square complex is \emph{nonpositively curved} if the length of a closed path in the link of any of its vertices is at least four.
\end{defn}

By a result of Gromov \cite{gromov_hyperbolic}, a simply connected nonpositively curved square complex is $\cat$ in the metric sense.

\begin{defn}[\cite{sageev}]
Let $X$ be a square complex. A \emph{mid-edge} of a square $\mathsf{s}$ in $X$ is an edge (after subdivision of $\mathsf{s}$) running through the center of $\mathsf{s}$ and parallel to two of the edges of $\mathsf{s}$.
Declare two edges $e$ and $f$ to be equivalent if there exists a sequence $e = e_1, \cdots, e_n = f$ of edges such that $e_i$ and $e_{i+1}$ are opposite edges of some square of $X$.
Given an equivalence class $[e]$ of edges, the \emph{hyperplane dual to $e$}, denoted by $\mathsf{h}_e$, is the collection of mid-edges which intersect edges in $[e]$.
\end{defn}

\begin{defn}[\cite{wisephd}] A \textit{$\VH$-complex} is a square complex in which every 1-cell is labelled as either vertical or horizontal in such a way that each 2-cell is attached to a loop which alternates between horizontal and vertical 1-cells.
\end{defn}

The labelling of the edges of a $\VH$-complex as horizontal and vertical induces a labelling of the vertices in the link of any vertex as horizontal and vertical, thus making the link a bipartite graph. Similarly, the hyperplanes of a $\VH$ complex are also labelled as vertical and horizontal, with a vertical hyperplane being dual to an equivalence class of horizontal edges and a horizontal hyperplane being dual to an equivalence class of vertical edges.

\begin{remark}
Since the link of any vertex of a $\VH$-complex is bipartite, the length of a closed path is even. Thus a $\VH$-complex is nonpositively curved if there exists no bigon in any vertex link. 
\end{remark}

\subsection{Graphs of spaces} \label{section_graphs_spaces}
Graphs of groups are the basic objects of study in Bass-Serre theory \cite{serre}.
It was studied from a topological perspective in \cite{scottandwall} by looking at graphs of spaces instead of graphs of groups. We will adopt this point of view.

\begin{defn} \label{defn_graph_spaces} By a \textit{graph of spaces}, we mean the following data: $\Gamma$ is a connected graph, called the underlying graph. For each vertex $s$ (edge $a$) of $\Gamma$, $X_s$ ($X_a$) is a topological space. Further, whenever $a$ is incident to $s$, $\partial_{a,s} : X_{a} \to X_{s}$ is a $\pi_1$-injective continuous map. The \textit{geometric realisation} of the above graph of spaces is the space $X =( \bigsqcup_{s \in \Gamma^{(0)}} X_{s} \sqcup \bigsqcup_{a \in \Gamma^{(1)}} X_{a} \times [0,1]) /\sim$, where $(x,0)$ and $(x,1)$ are identified respectively with $\partial_{a,s}(x)$ and $\partial_{a,s'}(x)$. Here, $s$ and $s'$ are the two endpoints of $a$.
\end{defn}

Note that the universal cover of $X$ has the structure of a \emph{tree of spaces}, a graph of spaces whose underlying graph is the Bass-Serre tree of the associated graph of groups structure of $X$ \cite{scottandwall}.

\subsection{Tubular graphs of graphs}

\begin{defn} \label{defn_tubular_graph_graphs}
A \textit{tubular graph of graphs} is a finite graph of spaces in which each vertex space is a finite connected simplicial graph and each edge space is a simplicial graph homeomorphic to a circle. Further, the attaching maps are simplicial immersions. We will always assume that the underlying graph is connected.
\end{defn}
We note that no vertex graph is a tree, as a consequence of the definition. We also remark that asking for each vertex graph to be simplicial is not a serious restriction as every one dimensional CW complex is a simplicial graph after subdivision.

\begin{remark}
Tubular graphs of graphs are topological versions of certain graphs of free groups with cyclic edge groups (see \cref{section_general} for a definition). We note that any graph of free groups with cyclic edge groups in which the underlying graph is a tree can be realised topologically as a tubular graph of graphs. If the underlying graph contains a loop such that the generator of the edge group is attached to words of different lengths in the incident vertex group, then such a graph of free groups with cyclic edge groups cannot be realised as a tubular graph of graphs. In particular, this rules out non-Euclidean Baumslag-Solitar groups.
\end{remark}
We also have
\begin{prop}[\cite{wisephd}] \label{prop_tubular_vh}
The geometric realisation of a tubular graph of graphs is a finite (hence compact), connected nonpositively curved $\VH$-complex whose vertical hyperplanes are circles.
\end{prop}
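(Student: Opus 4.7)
The plan is to equip the realisation $X$ with an explicit square-cell structure, verify the $\VH$ conditions, and then read off the curvature and hyperplane statements.

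First I would subdivide each tube $X_a \times [0,1]$ using the simplicial structure of $X_a$: if $X_a$ has edges $e_1, \ldots, e_{n_a}$ arranged around the circle, then $X_a \times [0,1]$ is a union of unit squares $e_j \times [0,1]$. Because each attaching map $\partial_{a,s}$ is simplicial, the identifications at $X_a \times \{0,1\}$ send vertices to vertices and edges to edges of the vertex spaces, so $X$ inherits a square-complex structure. Finiteness (hence compactness) and connectedness follow from the finiteness and connectedness assumptions in \cref{defn_tubular_graph_graphs}. I would then label an edge of $X$ as horizontal if it has the form $\{w\} \times [0,1]$ for some vertex $w$ of some $X_a$, and vertical otherwise; note that edges of the vertex spaces and edges of the tube boundaries $X_a \times \{0,1\}$ are all vertical, and the identifications are consistent with this labelling. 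Each square $e_j \times [0,1]$ then has boundary alternating vertical-horizontal-vertical-horizontal, as required for a $\VH$-complex.

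For nonpositive curvature, by the remark preceding this proposition it suffices to show that no vertex link contains a bigon. Fix a vertex $v$ of $X$. Every square of $X$ lies in a unique tube, so a square-corner at $v$ is specified by a tuple $(a, i, w)$ where $a \in \Gamma^{(1)}$, $i \in \{0,1\}$ indexes a side of the tube, and $w$ is a vertex of $X_a$ with $\partial_{a,s}(w)=v$, together with a choice of one of the two edges of $X_a$ incident to $w$. Two such corners yield the same horizontal half-edge in $\link(v)$ exactly when they share $(a,i,w)$, and they then yield the same vertical half-edge only if the two edges of $X_a$ at $w$ are sent to the same half-edge at $v$. That is excluded precisely by the assumption that $\partial_{a,s}$ is a simplicial \emph{immersion}, i.e.\ locally injective at $w$. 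Hence there is no bigon and $X$ is nonpositively curved.

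Finally, for the vertical hyperplane condition I would observe that horizontal edges occur only inside tubes. Within a single tube $X_a \times [0,1]$, any two horizontal edges $\{w\}\times[0,1]$ and $\{w'\}\times[0,1]$ at adjacent vertices $w,w'$ of $X_a$ are opposite sides of the square $[w,w']\times[0,1]$; running around $X_a$ then links every horizontal edge of the tube into a single equivalence class. At the two boundary circles horizontal edges terminate in vertex spaces, which contain no horizontal edges, so this class is exactly the horizontal edges of that one tube. The dual vertical hyperplane is therefore the midline $X_a \times \{1/2\}$, which is homeomorphic to the circle $X_a$. The most delicate step in the plan is the bigon check: the simplicial immersion hypothesis is doing real work there, since without local injectivity two edges of $X_a$ at some vertex could collapse to a single edge at $v$ and produce a bigon.
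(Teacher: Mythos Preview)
Your argument is correct: the square-complex and $\VH$ structure are set up cleanly, the bigon check correctly isolates the simplicial-immersion hypothesis as the key ingredient, and the hyperplane computation is right (horizontal edges in a tube form a single equivalence class, closed off from other tubes because vertex graphs contain no horizontal edges). The paper itself does not prove this proposition at all---it simply attributes the statement to Wise's thesis and moves on---so there is no in-paper proof to compare against; your write-up supplies exactly the kind of verification the paper omits.
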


\begin{convention} Throughout this text, we will use the same notation for a graph of graphs and the $\VH$-complex which is its geometric realisation. $X$ will denote a tubular graph of graphs with underlying graph $\Gamma_X$. Let $s \in \Gamma_X$ be a vertex. Then $X_s$ will denote the vertex graph at $s$ and if $a$ is an edge of $\Gamma_X$, we will denote the edge graph at $a$ by $X_{a}$. Thus, every edge of any vertex graph $X_s$ will be a vertical edge in the $\VH$-complex $X$ while horizontal edges in $X$ are the edges of the form $\{v\} \times [0,1]$, for vertices $v$ in the edge graphs $X_a$.
\end{convention}

\begin{defn} [Thickness] \label{def_thickness}
For an edge $e$ in $X$,  the \textit{thickness} of $e$ is the number of squares of $X$ which contain $e$.
\end{defn}

Observe that a horizontal edge of $X$ always has thickness equal to two.

\begin{defn} Let $X_{s}$ be a vertex graph of a tubular graph of graphs $X$. We say that $X_{s}$ (and hence $X$) has a \textit{hanging tree} if $X_{s}$ is a wedge of two subgraphs $A$ and $B$ such that one of them, say $A$, is a tree. Here, $A$ is called a hanging (sub)tree of $X_{s}$. 

\end{defn}

\begin{remark} Since the attaching maps of edge graphs are immersions, an edge in a hanging tree of $X$ has thickness zero.
\end{remark}

We thus have that

\begin{lemma}
A tubular graph of graphs is homotopy equivalent to a tubular graph of graphs with no hanging trees.
\end{lemma}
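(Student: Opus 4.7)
My plan is to collapse hanging trees one at a time until none remain. Suppose $X_s = A \vee B$ with wedge vertex $v$, where $A$ is a hanging tree with at least one edge (if $A$ consists of the single vertex $v$, there is nothing to do). The key input is the remark just stated: because the attaching map $\partial_{a,s}\colon X_a \to X_s$ of every incident edge circle is a simplicial immersion, every edge of $A$ has thickness zero. Consequently no square of $X$ contains an edge of $A$, and no edge circle $X_a$ meets $A$ except possibly at the wedge vertex $v$ (indeed, if the image of $X_a$ contained an edge $e$ of $A$, then since $X_a$ is a circle immersing simplicially, $e$ would be crossed and thus have positive thickness). Therefore, viewed as a subcomplex of the geometric realisation, $A$ is attached to the rest of $X$ only at the single point $v$.

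Because $A$ is a tree, hence contractible, the quotient map $X \twoheadrightarrow X' := X/A$ collapsing $A$ to $v$ is a homotopy equivalence. I next verify that $X'$ inherits a tubular-graph-of-graphs structure: take $\Gamma_{X'} = \Gamma_X$, keep every edge space $X_a$ unchanged, keep every vertex space $X_t$ for $t \ne s$ unchanged, and set $X'_s := B$. The attaching maps for edges incident to $s$ already took values in $B \subset X_s$ by the thickness-zero observation, so they descend unchanged to simplicial immersions into $X'_s = B$, and remain $\pi_1$-injective. Connectedness and simpliciality of $B$ are clear since $B$ is a subgraph of the simplicial graph $X_s$ that contains the vertex $v$.

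I would then iterate. At each step the total number of edges of the vertex graphs strictly decreases (by the number of edges of $A$, which is $\geq 1$), so the process terminates in finitely many steps. The terminal complex contains no hanging subtree by construction, which yields the claimed tubular graph of graphs.

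The only real point that needs care is checking that $X'$ is a legitimate tubular graph of graphs, and in particular that the new vertex graph $B$ is still a valid vertex space; this reduces entirely to the thickness-zero remark, which guarantees both that the attaching maps factor through $B$ and that the $\pi_1$-injectivity hypothesis is automatically preserved (if $s$ has any incident edge in $\Gamma_X$, the corresponding embedded circle already lives in $B$, so $\pi_1(B)$ is nontrivial). Everything else is formal.
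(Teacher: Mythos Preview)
Your argument is correct and is precisely the approach the paper intends: the lemma is stated immediately after the remark that edges in a hanging tree have thickness zero, and the paper gives no further proof, treating the collapse of hanging trees as evident from that remark. You have simply written out the details---that the thickness-zero observation forces every attaching map to land in $B$, so collapsing the contractible subcomplex $A$ yields a homotopy-equivalent tubular graph of graphs, and iteration terminates because the edge count drops.
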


\begin{defn} An edge $e$ in $X_s$ is a \emph{rudimentary edge} if it is of thickness one and moreover $X_{s}$ is a circle.
\end{defn}

\begin{lemma}\label{lemma_removing_rudimentary_edges}
A tubular graph of graphs is homotopically equivalent to a tubular graph of graphs with no rudimentary edges.
\end{lemma}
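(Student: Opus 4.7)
The plan is to remove rudimentary edges one at a time by a cylinder collapse, strictly decreasing the number of squares at each step; finiteness then gives termination.

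Suppose $e \subset X_s$ is a rudimentary edge. Since $X_s$ is a circle and each attaching map $\partial_{a,s} \colon X_a \to X_s$ is a $\pi_1$-injective simplicial immersion between circles, $\partial_{a,s}$ is a combinatorial covering map of some degree $k_a \geq 1$. Such a covering contributes exactly $k_a$ to the thickness of \emph{every} edge of $X_s$, so all edges of $X_s$ share a common thickness equal to $\sum_a k_a$ taken over all tube-ends at $s$. The hypothesis that this thickness equals $1$ forces a unique tube $T$ incident to $s$, attached on the $s$-side by an isomorphism $X_a \xrightarrow{\sim} X_s$. Moreover, the opposite end of $T$ cannot also be attached to $X_s$, for otherwise it would contribute at least one further unit to the thickness; hence it is attached to some $X_t$ with $t \neq s$, and in particular $s$ is a leaf of $\Gamma_X$.

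Next I would collapse. The subcomplex $X_s \cup T$ is a cylinder $S^1 \times [0,1]$ with $X_s = S^1 \times \{0\}$, and its other boundary $S^1 \times \{1\}$ is already identified inside $X_t$ via $\partial_{a,t}$. The standard deformation retraction of $S^1 \times [0,1]$ onto $S^1 \times \{1\}$, extended by the identity on the rest of $X$, produces a homotopy equivalence of $X$ with the subcomplex $X' := X \setminus (X_s \cup \mathrm{int}(T))$. This $X'$ inherits the structure of a tubular graph of graphs: its underlying graph is $\Gamma_X$ with the leaf $s$ and its incident edge $a$ deleted (still connected), its vertex and edge spaces are those of $X$ apart from $X_s$ and $X_a$, and the remaining attaching maps are unchanged. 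The number of squares of $X'$ equals that of $X$ minus the number of squares of $T$, hence is strictly smaller.

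Iterating this operation yields a sequence of homotopy equivalent tubular graphs of graphs whose square counts form a strictly decreasing sequence of nonnegative integers. The procedure therefore terminates at a tubular graph of graphs with no rudimentary edges. I do not anticipate a genuine obstacle: the key observation is that a circle vertex graph is forced to have uniform thickness, which completely pins down the local structure around a rudimentary edge; the rest is a routine cylinder retraction together with a finite descent on the number of squares. The one small bookkeeping point is that collapsing $T$ may lower the thickness of edges in $X_t$ and thereby create \emph{new} rudimentary edges if $X_t$ happens to be a circle, but this is absorbed into the iteration.
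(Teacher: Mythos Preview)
Your argument is correct and follows essentially the same route as the paper: both identify that a rudimentary edge forces exactly one tube attached to $X_s$ by an isomorphism (you justify this more carefully via the covering-degree/thickness count and explicitly rule out a loop), then collapse the resulting cylinder onto the far vertex graph and iterate. Your explicit termination argument via the strictly decreasing square count is a nice addition; the paper simply says ``repeat at each rudimentary edge.''
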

\begin{proof} 
Let $e$ be a rudimentary graph in a vertex graph $X_{s}$ of a tubular graph of graphs $X$. Since $X_s$ is a circle and attaching maps of edge graphs to $X_s$ are graph immersions, there exists exactly one edge $a$ incident to $s$ in the underlying graph $\Gamma_X$ and the attaching map from $X_a$ to $X_s$ is a graph isomorphism (see \cref{fig: rudimentary edge}).

\begin{figure}
\begin{center}
\begin{tikzpicture}[scale = 0.5]
\draw (0,0).. controls (1,1) and (1,3).. (0,4) ;
\draw [dotted] (0,0).. controls (-0.7,1) and (-.7,3).. (0,4) ;
\shade[gray] (0.7,1.5) rectangle (2.7,2.5);
\draw (0.7,1.5) -- (0.7,2.5) ;
\draw (0,0) -- (2.7,0) ;
\draw (0,4) -- (2.7,4) ;
\draw [->] (0.7,3) -- (2.6,3) ;
\draw [->] (0.7,1) -- (2.6,1) ;
\filldraw (0.7,1.5) circle (1pt) ;
\filldraw (0.7,2.5) circle (1pt) ;
\draw (0.7,1.5) -- (2.7,1.5) ;
\draw (0.7,2.5) -- (2.7,2.5) ;
\filldraw (2.7,1.5) circle (.3pt) ;
\filldraw (2.7,2.5) circle (.3pt) ;
\filldraw (0.7,1.5) circle (1pt) ;
\filldraw (0.7,2.5) circle (1pt) ;
\filldraw (1.85,1.5) circle (0.5pt) ;
\filldraw (1.85,2.5) circle (0.5pt) ;
\draw [dotted] (1.5,0).. controls (2,1) and (2,3).. (1.5,4) ;
\draw [dotted] (1.5,0).. controls (1.1,1) and (1.1,3).. (1.5,4) ;
\node [below] at (0,0) {$X_{s}$} ;
\node [below] at (1.5,0) {$X_{a}$} ;
\node [left] at (0.7,2) {$e$} ;
\end{tikzpicture}
\end{center}
\caption{Removing rudimentary edges.} \label{fig: rudimentary edge}
\end{figure}
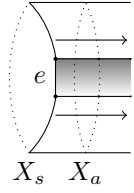
Thus $X$ is homotopic to $X'$ obtained by removing $X_a$ and the open tube containing $X_a$. $\Gamma_{X'}$ is the graph obtained from $\Gamma_X$ by collapsing $a = (s, s')$ to $s'$. Repeating this procedure at each rudimentary edge gives the result.
\end{proof}

\subsection{Ends}
The theory of ends of a topological space was first studied by Freudenthal \cite{Freudenthal_ends}. 
The notion we require is that of ``one-endedness''. We will use the following definition due to Specker (see \cite{specker} or \cite{raymond}).

\begin{defn}
A locally finite CW complex $X$ is \emph{one-ended} if for every compact set $K$, $X \setminus K$ has exactly one unbounded component.
\end{defn}

It is a well-known fact that being one-ended is a quasi-isometry invariant (see Proposition I.8.29 of \cite{bridsonhaefliger}, for instance). Then by an application of the \v Svarc-Milnor Lemma (see Proposition I.8.19 of \cite{bridsonhaefliger}), for instance), we have the following definition of one-endedness of a finitely presented group.

\begin{prop} Let $G$ be a finitely presented group and $X$ be a finite connected CW complex such that $G \cong \pi_1(X)$. $G$ is one-ended if and only if $\wt{X}$ is one-ended. 
\end{prop}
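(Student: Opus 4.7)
The plan is to reduce the statement to the two facts recalled immediately above it: quasi-isometry invariance of one-endedness (Proposition I.8.29 of \cite{bridsonhaefliger}) and the Švarc–Milnor Lemma (Proposition I.8.19 of \cite{bridsonhaefliger}). The strategy is to exhibit a quasi-isometry between $\wt X$ and a Cayley graph of $G$, and then transport one-endedness across it.

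First, I would put the natural piecewise-Euclidean metric on $X$ (each cell metrized as a standard Euclidean cell), and pull it back to a path metric on $\wt X$. Because $X$ is a finite CW complex, this metric makes $\wt X$ a proper geodesic metric space, and the action of $G = \pi_1(X)$ on $\wt X$ by deck transformations is by isometries, properly discontinuous, and cocompact (a compact fundamental domain being obtained by lifting $X$). By the Švarc–Milnor Lemma, $G$ is therefore finitely generated and, for any finite generating set $S$, the Cayley graph $\mathrm{Cay}(G,S)$ with its word metric is quasi-isometric to $\wt X$.

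By definition (or equivalently by a second application of Švarc–Milnor to two different finite generating sets), $G$ is one-ended exactly when $\mathrm{Cay}(G,S)$ is one-ended, and this does not depend on $S$ since any two such Cayley graphs are quasi-isometric and one-endedness is a quasi-isometry invariant. Applying the same invariance property to the quasi-isometry $\wt X \sim \mathrm{Cay}(G,S)$ produced in the previous step yields the desired equivalence: $\wt X$ is one-ended iff $\mathrm{Cay}(G,S)$ is one-ended iff $G$ is one-ended.

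There is essentially no genuine obstacle here; the only point requiring care is the verification of the hypotheses of Švarc–Milnor, namely properness of $\wt X$ as a metric space and the properness/cocompactness of the deck action. Both are standard consequences of $X$ being a finite connected CW complex, so the proof ultimately consists in assembling the two cited propositions together with these routine facts about universal covers.
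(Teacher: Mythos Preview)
Your proposal is correct and matches the paper's approach exactly: the paper does not give a detailed proof but simply states that the proposition follows by combining quasi-isometry invariance of one-endedness (Proposition I.8.29 of \cite{bridsonhaefliger}) with the \v{S}varc--Milnor Lemma (Proposition I.8.19 of \cite{bridsonhaefliger}), and your write-up is a faithful elaboration of precisely that argument.
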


\section{Not one-ended}
In this section, we will collect a few results that help determine when the fundamental group of a tubular graph of graphs is not one-ended.

\begin{defn} Let $Z$ be a CW complex and $v \in Z$ be a vertex. Let $\tilde{v}$ denote a lift of $v$ in the universal cover $\wt{Z}$ of $Z$. The \emph{star of $v$}, denoted by $\st(v)$, is the smallest subcomplex of $\wt{Z}$ which contains all cells $\sigma$ such that $\tilde{v} \in \sigma$.  The \emph{open star of $v$}, denoted by $\mathring{\st}(v)$, is the interior of $\st(v)$. 
\end{defn}

We first recall a classical result due to Hopf:

\begin{lemma}[\cite{hopf_groupends}] \label{cor_freee_amalgam_ends}
Let $G$ be a torsion-free finitely generated group such that either \begin{enumerate}
\item $G = H * K$ is a nontrivial free splitting of $G$, or 
\item  $G = H *_{1}$ is an HNN extension of a finitely generated group $H$ over its trivial subgroup.
\end{enumerate}
Then $G$ is not one-ended.
\end{lemma}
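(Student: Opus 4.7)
The plan is to prove both cases by exhibiting a locally finite Cayley graph of $G$ in which a single vertex (the identity) separates the graph into at least two unbounded components.

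For case (1), write $G = H * K$ with the splitting nontrivial, so both factors are nontrivial and, being torsion-free, infinite. Since $G$ is finitely generated, Grushko's theorem guarantees that $H$ and $K$ are finitely generated; fix finite generating sets $S_H \subset H \setminus \{1\}$ and $S_K \subset K \setminus \{1\}$, and set $S = S_H \sqcup S_K$. The Cayley graph $\Gamma = \mathrm{Cay}(G, S)$ is then locally finite, and $G$ acts freely, properly and cocompactly on it, so $G$ is one-ended if and only if $\Gamma$ is. I will show that $\Gamma \setminus \{1\}$ has at least two unbounded components.

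The combinatorial input is the normal-form theorem for free products: every $g \neq 1$ admits a unique expression $g = g_1 g_2 \cdots g_n$ with the syllables $g_i$ alternately in $H \setminus \{1\}$ and $K \setminus \{1\}$. Define $\phi(g) \in \{H, K\}$ by recording the factor containing the first syllable $g_1$, and partition $G \setminus \{1\} = U_H \sqcup U_K$ accordingly. Right multiplication by a single generator $s \in S$ affects the normal form only at the rightmost syllable (by appending, merging, or cancelling), so whenever $gs \neq 1$ the first syllable --- and hence $\phi$ --- is unchanged. Therefore every edge of $\Gamma \setminus \{1\}$ stays within $U_H$ or within $U_K$, and these two sets lie in different components. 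Both sets are unbounded: choosing $a \in H$ and $b \in K$ of infinite order (which exist since $H$ and $K$ are nontrivial torsion-free), the rays $\{a^n\}_{n \geq 1}$ and $\{b^n\}_{n \geq 1}$ lie in $U_H$ and $U_K$ respectively and go to infinity in $\Gamma$.

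Case (2) reduces to case (1). The HNN extension $H *_1$ with trivial associated subgroups is by definition $H * \langle t \rangle \cong H * \mathbb{Z}$, where $t$ is the stable letter. If $H$ is trivial then $G \cong \mathbb{Z}$, which has two ends. Otherwise $H$ is nontrivial and torsion-free, hence infinite, and $G$ is a nontrivial free product of two infinite torsion-free groups, to which case (1) applies.

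The only substantive step is the invariance of the first syllable under right multiplication by a single generator, which is an immediate consequence of the uniqueness of the free-product normal form; I do not expect a significant obstacle beyond this observation.
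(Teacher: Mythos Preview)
The paper does not supply its own proof of this lemma: it is stated as a classical result and attributed to Hopf via the citation \cite{hopf_groupends}, and the only comment the paper adds is the observation (which you also make) that case~(2) reduces to case~(1). So there is no argument in the paper to compare yours against.

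Your argument is correct. The normal-form analysis showing that right multiplication by a generator in $S_H \cup S_K$ cannot change the first syllable (unless the product becomes trivial) is exactly what is needed to see that $U_H$ and $U_K$ lie in distinct components of $\Gamma \setminus \{1\}$. One small point of phrasing: the sequence $\{a^n\}_{n\geq 1}$ is an edge-path in $\Gamma$ only if $a$ is actually a generator in $S_H$; since any nontrivial element of the torsion-free group $H$ has infinite order, you may as well take $a \in S_H$ (and similarly $b \in S_K$), which makes the word ``ray'' literal. Alternatively, it suffices to note that $U_H \supset H \setminus \{1\}$ is infinite and that $\Gamma \setminus \{1\}$ has only finitely many components (the identity has finite degree), so at least one component inside $U_H$ must be unbounded.
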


We remark that the two conditions above are cases of an edge of groups with trivial edge group. We make a distinction between them because of the following standard lemma.

\begin{lemma} \label{lemma_free_amalgam_hnn} Let $Z$ be a connected CW complex. Let $c \in Z$ be a vertex. Suppose that either
\begin{enumerate}
\item $Z = Z_1 \vee_{\{c\}} Z_2$, with neither $Z_1$ nor $Z_2$ simply connected, or
\item $\st(c) \setminus \{c\}$ is not connected, but $Z\setminus \{c\}$ is connected.
\end{enumerate}
Then $\widetilde{Z}$ is not one-ended.
\end{lemma}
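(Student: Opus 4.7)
The plan is to handle the two cases separately. For case (1), I would apply Seifert-van Kampen at the wedge vertex $c$ to obtain $\pi_1(Z) \cong \pi_1(Z_1) * \pi_1(Z_2)$ as a nontrivial free product. The universal cover $\widetilde{Z}$ then carries a natural tree-of-spaces structure whose dual is the Bass-Serre tree $T$ of this free product: vertex spaces are copies of $\widetilde{Z_1}$ and $\widetilde{Z_2}$, glued at lifts of $c$. Since neither $\pi_1(Z_i)$ is trivial, every vertex of $T$ has valence at least two, so $T$ has at least two ends. Any compact $K \subset \widetilde{Z}$ meets only finitely many vertex spaces by local finiteness, hence corresponds to a finite subset of vertices of $T$; removing these leaves at least two unbounded components of $T$, each of which yields an unbounded component of $\widetilde{Z} \setminus K$. (Alternatively, one could cite \cref{cor_freee_amalgam_ends} once the relevant groups are known to be torsion-free.)

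For case (2), the plan is to reduce to case (1) by ``opening up'' $Z$ at $c$. Let $A_1, \dots, A_k$ with $k \ge 2$ denote the components of $\st(c) \setminus \{c\}$, and form $Z'$ from $Z$ by replacing $c$ with $k$ distinct vertices $c_1, \dots, c_k$, reattaching every cell that met $c$ in the component $A_i$ to $c_i$. The hypothesis that $Z \setminus \{c\}$ is connected guarantees $Z'$ is connected, since each $c_i$ has a neighbor in $A_i \subset Z \setminus \{c\}$ and the $c_i$ are therefore mutually connected through $Z \setminus \{c\}$. Recovering $Z$ from $Z'$ amounts to identifying $c_1, \dots, c_k$ to a single point, and by attaching arcs in a mapping-cylinder fashion and then using paths in $Z'$ joining $c_1$ to each other $c_i$, I can produce a homotopy equivalence
\[
Z \ \simeq\ Z' \vee \bigvee^{k-1} S^1.
\]
Hence $\pi_1(Z) \cong \pi_1(Z') * F_{k-1}$ is a nontrivial free product, and the argument of case (1) applied to this wedge decomposition gives the conclusion.

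The main technical step will be setting up the opening-up construction and verifying the wedge homotopy equivalence in case (2) carefully at the CW-complex level; once that is in place, both cases reduce to the same end-counting argument in a Bass-Serre tree.
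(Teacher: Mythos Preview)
The paper does not actually prove this lemma; it labels it a ``standard lemma'' immediately after recalling Hopf's result (\cref{cor_freee_amalgam_ends}), so the intended argument is exactly to read off a nontrivial free product in case~(1) and an HNN extension over the trivial group (equivalently, a free product with $\mathbb{Z}$) in case~(2), and then invoke Hopf. Your plan is precisely this standard argument: van Kampen in case~(1), and your opening-up in case~(2) yields $\pi_1(Z)\cong \pi_1(Z')\ast F_{k-1}$, which is Hopf's condition~(2) rewritten as a free product.

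One small gap to patch in case~(2): you reduce to case~(1) by writing $Z\simeq Z'\vee\bigvee^{k-1}S^1$, but case~(1) requires \emph{both} wedge summands to be non--simply-connected, and nothing prevents $Z'$ from being simply connected. This is harmless, since then $\pi_1(Z)\cong F_{k-1}$ is free of rank $k-1\geq 1$ and hence has two or infinitely many ends; just say so explicitly. Also note that \cref{cor_freee_amalgam_ends} as stated assumes torsion-freeness, which the lemma does not; your direct Bass--Serre tree argument (a lift $\tilde c$ is a cut point of $\widetilde Z$ with both sides unbounded) avoids this and is the cleaner route.
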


\begin{prop} \label{prop_thickness0} Let $X$ be a tubular graph of graphs with no hanging trees. Suppose there exists an edge of thickness zero. Then $\widetilde{X}$ is not one-ended.
\end{prop}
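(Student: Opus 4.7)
The plan is to exhibit a specific vertex whose local structure is too weak to sustain one-endedness, and then invoke \cref{lemma_free_amalgam_hnn}.

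First I would locate the relevant vertex. Since every horizontal edge has thickness exactly two, the given edge $e$ of thickness zero must be vertical and must lie in some vertex graph $X_s$. Subdividing $e$ at its midpoint $c$ keeps us in the category of $\VH$-complexes; because $e$ has thickness zero, no square of $X$ contains $e$, so after subdivision no square contains $c$, and therefore $\st(c)$ consists only of the two half-edges of $e$ meeting at $c$. In particular $\st(c)\setminus\{c\}$ is disconnected. This is the key local feature that will let me feed $c$ into \cref{lemma_free_amalgam_hnn}.

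Next I would split into two cases according to whether $X\setminus\{c\}$ is connected. If it is connected, then the hypotheses of part (2) of \cref{lemma_free_amalgam_hnn} are met directly by $c$, and we conclude that $\wt{X}$ is not one-ended. If $X\setminus\{c\}$ is disconnected, then in particular $X_s\setminus\{c\}$ is disconnected, so $e$ is a bridge of $X_s$ and we may write $X_s=A\vee_{c} B$ (after subdivision), where $A$ and $B$ are the closures of the two components of $X_s\setminus\{c\}$. The no-hanging-trees hypothesis now becomes crucial: if either $A$ or $B$ were a tree, then $X_s$ would decompose as a wedge with a tree summand, contradicting the hypothesis. Hence both $A$ and $B$ contain cycles $\gamma_A,\gamma_B$. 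Globally, $X=X_1\cup X_2$ with $X_1\cap X_2=\{c\}$, where $X_1\supset A$ and $X_2\supset B$, so $X$ is a wedge $X=X_1\vee_{\{c\}}X_2$.

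The remaining task, which I expect to be the main technical point, is to show that neither $X_1$ nor $X_2$ is simply connected, so that part (1) of \cref{lemma_free_amalgam_hnn} applies. For this I would chain together three $\pi_1$-injective inclusions: the inclusion of the connected subgraph $A$ into the graph $X_s$ is $\pi_1$-injective (subgraphs of graphs lift to subforests of the universal cover); the inclusion $X_s\hookrightarrow X$ is $\pi_1$-injective because $X$ is the geometric realisation of a graph of spaces with $\pi_1$-injective attaching maps; and the inclusion $X_1\hookrightarrow X_1\vee_{\{c\}}X_2=X$ is $\pi_1$-injective by van Kampen. Following $\gamma_A$ through this chain shows $[\gamma_A]\neq 1$ in $\pi_1(X_1)$, and symmetrically $[\gamma_B]\neq 1$ in $\pi_1(X_2)$. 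Then \cref{lemma_free_amalgam_hnn}(1) yields that $\wt{X}$ is not one-ended, completing the proof.
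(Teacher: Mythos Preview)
Your proposal is correct and follows essentially the same route as the paper: subdivide the thickness-zero vertical edge at its midpoint $c$, observe that $\st(c)\setminus\{c\}$ is disconnected, split into cases according to whether $c$ separates $X$, and in the wedge case use the no-hanging-trees hypothesis to see that both pieces of $X_s$ carry nontrivial $\pi_1$ which survives in $\pi_1(X_i)$. The only difference is cosmetic: where the paper cites the graph-of-groups injection $\pi_1(A)\hookrightarrow\pi_1(X_1)$ directly, you route the cycle $\gamma_A$ through $\pi_1(X)$ via the chain $A\hookrightarrow X_s\hookrightarrow X$ and then pull back along $X_1\hookrightarrow X$; both arguments are valid and amount to the same thing.
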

\begin{proof}
Since all horizontal edges have thickness two, an edge of thickness zero has to be vertical. Let $e$ in $X_{s}$ be such an edge with midpoint $c$. Subdivide $e$ so that $c$ is a vertex of $X$. Either $X \setminus \{c\}$ is connected, or $X = X_1 \vee_c X_2$ is a wedge of two subcomplexes. Let $X_{s} = A \vee_c B$ be the induced decomposition of $X_{s}$. Since $e$ is not in a hanging tree, neither $A$ nor $B$ is a tree. Thus, $X_1$ and $X_2$ are not simply connected (as $\pi_1(A) \hookrightarrow \pi_1(X_1)$ and $\pi_1(B) \hookrightarrow \pi_1(X_2)$ in the graph of groups setup \cite{serre}). \cref{lemma_free_amalgam_hnn} then gives the result.
\end{proof}

\begin{prop} \label{prop_thickness1} Let $X$ be a tubular graph of graphs with no hanging trees and no rudimentary edges. Suppose there exists an edge of thickness one. Then $\widetilde{X}$ is not one-ended. \end{prop}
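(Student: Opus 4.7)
The plan is to adapt the dichotomy used in the proof of \cref{prop_thickness0}, with extra care to handle the one square $\mathsf{s}$ attached along the thickness-one edge. Let $e \subset X_s$ be the vertical edge of thickness one (horizontal edges have thickness two), $\mathsf{s}$ the unique square containing $e$, $e' \subset X_{s'}$ the opposite vertical edge of $\mathsf{s}$, and $h_1, h_2$ the horizontal edges of $\mathsf{s}$ joining the endpoints $v_1, v_2$ of $e$ to the corresponding endpoints $v_1', v_2'$ of $e'$.

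The decisive local observation is that at each endpoint $v_i$, the vertex of $\link(v_i)$ corresponding to the half-edge of $e$ has degree exactly one, its unique neighbor being the half-edge of $h_i$, because $e$ is contained in only one square. Thus $\link(v_i)$ has $e$ as a leaf, and either $\link(v_i)$ is already disconnected or the singleton simplex $\{h_i\}$ disconnects it (isolating the leaf $e$ from the rest).

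In the first case, the argument of \cref{prop_thickness0} applies directly at $v_1$ in place of its midpoint: $\mathring{\st}(v_1) \setminus \{v_1\}$ is disconnected, so if $X \setminus \{v_1\}$ is connected then \cref{lemma_free_amalgam_hnn}\,(2) yields the conclusion; otherwise $X$ wedges at $v_1$, and exactly as in \cref{prop_thickness0} the no-hanging-tree and no-rudimentary-edge hypotheses force the induced decomposition $X_s = A \vee_{v_1} B$ to have both $A$ and $B$ containing cycles, so both factors have non-trivial $\pi_1$ and \cref{lemma_free_amalgam_hnn}\,(1) applies.

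In the second case, $\link(v_1)$ is connected but $\link(v_1) \setminus \{h_1\}$ disconnects with the component $\{e\}$ isolated, witnessing the failure of (BM2) at $v_1$. The plan here is to perform an opening-up modification (an SL-move) at $v_1$ along the disconnecting simplex $\{h_1\}$: split $v_1$ into two vertices $v_1^a, v_1^b$ corresponding to the two components of $\link(v_1) \setminus \{h_1\}$, redistribute the incident cells, and adjust $h_1$ accordingly. The outcome is a homotopy equivalent tubular graph of graphs $X'$ in which the leaf component containing $e$ is severed from the rest of the star at the former $v_1$; the edge $e$ then ends up attached to the new isolated vertex $v_1^a$ and is no longer contained in any square of $X'$, i.e.\ it has thickness zero in $X'$. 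Applying \cref{prop_thickness0} to $X'$ (after possibly removing newly created hanging trees or rudimentary edges by the preceding lemmas) completes the argument. The main obstacle is the precise formulation and verification of the opening-up move: one must show that it is a homotopy equivalence, that it preserves the tubular-graph-of-graphs category, and that the resulting edge is genuinely of thickness zero and not of some newly-hanging-tree type that the reduction has already eliminated.
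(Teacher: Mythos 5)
Your case distinction at the endpoint $v_1$ is where the argument departs from the paper, and your second case contains a genuine gap. The move you propose there --- splitting $v_1$ according to the two components of $\link(v_1)\setminus\{h_1\}$ --- is not the paper's SL-move (which opens only along a \emph{vertical} link-vertex, duplicates the corresponding vertical edge, and keeps every square and every attaching map intact), and as described it is not well defined: the square $\mathsf{s}$ forces $e$ and $h_1$ to share an endpoint, while the second square through $h_1$ forces $h_1$ and the other vertical half-edge $g$ to share an endpoint, so you cannot ``redistribute the incident cells'' with $e$ and $g$ sent to different new vertices without breaking one of these two squares. More fundamentally, making $e$ have thickness zero means changing the attaching map of the tube so that its attaching circle no longer traverses $e$; deleting one traversal from an attaching word is not a homotopy equivalence in general. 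The homotopy-preserving operation actually available here is the elementary collapse of the free pair $(\mathsf{s},e)$, which removes $e$ altogether and leaves the tubular category. Finally, even if a move with your stated outcome existed, it would prove nothing: the resulting $e$ would be a pendant edge at the new valence-one vertex $v_1^a$, hence a hanging tree of the new vertex graph, and after the removal of hanging trees that the hypotheses of \cref{prop_thickness0} demand, your thickness-zero edge is gone; cutting at its midpoint before removal only gives the trivial splitting, with one side simply connected.

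The paper's proof avoids the case analysis entirely and is the way to handle your second case: since $\mathsf{s}$ is the only square containing $e$, collapse $(\mathsf{s},e)$; the adjacent horizontal edge of the tube then has thickness one, and iterating the collapse around the tube removes all of its squares, leaving one horizontal edge $f'$ of thickness zero in a complex $X'$ homotopy equivalent to $X$ ($X'$ is no longer a tubular graph of graphs, but \cref{lemma_free_amalgam_hnn} does not require that). The wedge/HNN dichotomy is then applied at the midpoint of $f'$, and the absence of hanging trees and rudimentary edges guarantees that both wedge factors are non-simply connected. Your first case (disconnected $\link(v_1)$) is essentially correct --- it runs the argument of \cref{prop_1ended_is_BM1} with the no-hanging-tree hypothesis replacing the thickness-two hypothesis used there --- but it is subsumed by the collapse argument, and the substance of the proposition lies precisely in the situation your second case fails to treat.
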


\begin{proof}
Let $e$ be an edge in $X_s \hookrightarrow X$ of thickness one and $\mathsf{s}$ be the lone square containing $e$. We will show that $X$ is homotopic to a wedge of two non-simply connected square complexes.

Note that $X$ is homotopic to a complex obtained by removing the open square $\mathsf{s}$ and the open edge $e$ (\cref{fig: thickness one edge}). Removing $\mathsf{s}$ decreases the thickness of a horizontal edge $f$ adjacent to $e$. We remove the only square that contains $f$, which in turn creates another horizontal edge of thickness one. We continue removing until we end up with a horizontal edge $f'$ (adjacent to $e$, see \cref{fig: thickness one edge}) of thickness zero.

\begin{figure}
\begin{center}
\begin{tikzpicture} [scale = 0.5]
\draw (0,0).. controls (1,1) and (1,3).. (0,4) ;
\draw [dotted] (0,0) -- (-.5,.5) ;
\draw [dotted] (-0.5,3.5) -- (0,4) ;
\shade[gray] (0.7,1.5) rectangle (2.7,2.5);
\draw (0.7,1.5) -- (0.7,2.5) ;
\draw (0,0) -- (2.7,0) ;
\draw (0,4) -- (2.7,4) ;
\draw (0.7,1.5) rectangle (2.7,2.5) ;
\filldraw (0.7,1.5) circle (1pt) ;
\filldraw (0.7,2.5) circle (1pt) ;
\draw (0.7,1.5) -- (2.7,1.5) ;
\draw (0.7,2.5) -- (2.7,2.5) ;
\filldraw (2.7,1.5) circle (.3pt) ;
\filldraw (2.7,2.5) circle (.3pt) ;
\filldraw (0.7,1.5) circle (1pt) ;
\filldraw (0.7,2.5) circle (1pt) ;

\node [below] at (1.85,1.5) {$\mathsf{s}$};
\node [below] at (0,0) {$X_{s}$} ;
\node [left] at (0.7,2) {$e$} ;
\draw [dotted] (2.7,1.5) -- (3,1) ;
\draw [dotted] (2.7, 2.5) -- (3,3) ;
\draw [->] (3.5,2) -- (4.5,2) ;
\node [below] at (1.5,-1) {{\large $X$}} ;
\draw (6,0).. controls (6.5,.3) and (6.8,1).. (6.7,1.5) ;
\draw (6.7,2.5).. controls (6.8,3.5) and (6.2,3.8).. (6,4) ;
\draw [dotted] (6,0) -- (5.5,.5) ;
\draw [dotted] (5.5,3.5) -- (6,4) ;
\draw (8.7,1.5) -- (8.7,2.5) ;
\draw (6,0) -- (8.7,0) ;
\draw (6,4) -- (8.7,4) ;
\filldraw (6.7,1.5) circle (1pt) ;
\filldraw (6.7,2.5) circle (1pt) ;
\draw (6.7,1.5) -- (8.7,1.5) ;
\draw (6.7,2.5) -- (8.7,2.5) ;
\filldraw (8.7,1.5) circle (.3pt) ;
\filldraw (8.7,2.5) circle (.3pt) ;
\draw [dotted] (8.7,1.5) -- (9,1) ;
\draw [dotted] (8.7, 2.5) -- (9,3) ;

\draw [->] (9.5,2) -- (10.5,2) ;

\node [above] at (8.3,2.5) {$f$} ;
\draw (12,0).. controls (12.5,.3) and (12.8,1).. (12.7,1.5) ;
\draw (12.7,2.5).. controls (12.8,3.5) and (12.2,3.8).. (12,4) ;
\draw [dotted] (12,0) -- (11.5,.5) ;
\draw [dotted] (11.5,3.5) -- (12,4) ;
\draw (14.7,1.5) -- (14.7,2.5) ;
\filldraw (12.7,1.5) circle (1pt) ;
\filldraw (12.7,2.5) circle (1pt) ;
\draw (12.7,1.5) -- (14.7,1.5) ;
\filldraw (14.7,1.5) circle (.3pt) ;
\filldraw (14.7,2.5) circle (.3pt) ;
\draw [dotted] (14.7,1.5) -- (15,1) ;
\draw [dotted] (14.7, 2.5) -- (15,3) ;
\node [below] at (11.5,2) {$X_1$};
\node [below] at (16,2) {($X_2$)};
\node  at (13.8,2) {$f'$};
\node [below] at (13.8,-1) {{\large $X'$}};
\end{tikzpicture}
\caption{Removing squares containing thickness-one edges.} \label{fig: thickness one edge}
\end{center}   
\end{figure}
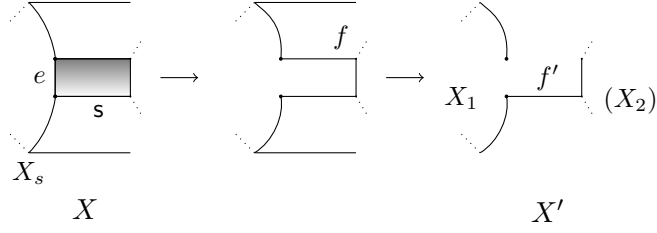

Call the resulting subcomplex as $X'$. Either the midpoint of $f'$ does not disconnect $X'$, or $X'$ is a wedge of two subcomplexes $X_1$ and $X_2$, say (\cref{fig: thickness one edge}). Note that $X_1$ is not simply connected as $X$ had neither hanging trees nor rudimentary edges. Also $X_2$ is not simply connected as $X_2$ is a subcomplex of $X$ which is not simply connected. By \cref{lemma_free_amalgam_hnn}, $\widetilde{X}$ is not one-ended.
\end{proof}

\begin{prop} \label{prop_1ended_is_BM1} Suppose that every edge of $X$ has thickness  at least two. If $\wt{X}$ is one-ended, then the link of every vertex of $X$ is connected.
\end{prop}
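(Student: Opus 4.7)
The plan is to prove the contrapositive: assume some vertex $v \in X$ has a disconnected link, and derive that $\wt{X}$ is not one-ended, contradicting the hypothesis. Since $\st(v)\setminus\{v\}$ deformation retracts onto $\link(v)$, the punctured open star of $v$ is disconnected too. I will feed this into Lemma~\ref{lemma_free_amalgam_hnn}. If $X\setminus\{v\}$ happens to be connected, then hypothesis (2) of that lemma applies directly and gives the contradiction immediately; so the substantive case is when $v$ is also a global cut vertex.

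In the cut-vertex case, group the components of $X\setminus\{v\}$ into two nonempty classes to write $X = X_1 \vee_v X_2$. To invoke hypothesis (1) of Lemma~\ref{lemma_free_amalgam_hnn} I must show that neither $X_i$ is simply connected, and this is where I will use the thickness assumption. Each $X_i$ contains at least one edge $e$ incident to $v$ (since $X$ is connected), and because $e$ has thickness $\geq 2$, it is contained in at least two squares of $X$. Any square of $X$ sharing an edge $e$ with $X_i$ has its interior in the same connected component of $X\setminus\{v\}$ as the interior of $e$, so that square belongs to $X_i$ itself; in particular each $X_i$ contains some square, and hence at least one square of some tube $T_i = X_{a_i} \times [0,1]$.

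The remaining step is to promote ``$X_i$ contains a square of $T_i$'' to ``$X_i$ contains the whole tube $T_i$'', and then to extract a nontrivial loop. For the first part I will use the midcircle $X_{a_i} \times \{1/2\}$: this is a vertical hyperplane of $X$ and is disjoint from the $0$-skeleton, so all squares of $T_i$ are joined within $X\setminus\{v\}$ and hence lie in the single component $X_i\setminus\{v\}$. For the second part, by Proposition~\ref{prop_tubular_vh} the midcircle is an honest circle in $X$, and since hyperplanes in the $\cat$ universal cover $\wt{X}$ are contractible, this circle must lift to a line in $\wt{X}$; hence it represents an element of infinite order in $\pi_1(X)$. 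The wedge decomposition gives $\pi_1(X) = \pi_1(X_1)*\pi_1(X_2)$, so in particular $\pi_1(X_i) \hookrightarrow \pi_1(X)$, and the core loop of $T_i\subset X_i$ remains nontrivial in $\pi_1(X_i)$. Thus neither $X_i$ is simply connected, Lemma~\ref{lemma_free_amalgam_hnn}(1) applies, and $\wt{X}$ is not one-ended, contradicting the hypothesis. I expect the main technical obstacle to be the tube-propagation step: the attaching map $\partial_{a_i,\cdot}$ can fold several vertices of $X_{a_i}$ onto the single vertex $v$, and the midcircle is precisely the gadget that evades all these preimages of $v$ simultaneously; without the thickness condition producing a square in the first place, nothing of the sort is available and a summand $X_i$ could consist of a dangling tree.
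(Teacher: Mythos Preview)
Your proof is correct and follows the same overall strategy as the paper: argue the contrapositive, observe that a disconnected link gives a disconnected punctured star, and invoke \cref{lemma_free_amalgam_hnn}, with the substantive work being to show each wedge summand $X_i$ is not simply connected.

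The only difference is in that last step. The paper packages the argument as \cref{lemma_thickness_two_infinite_pi1}: any compact nonpositively curved square complex in which every edge has thickness at least two has infinite $\pi_1$, because any hyperplane is a graph of minimum valence two and hyperplanes are $\pi_1$-injective. One then simply notes that the thickness hypothesis on $X$ is inherited by each $X_i$ (every square of $X$ meeting an edge of $X_i$ already lies in $X_i$, since its interior is connected to that edge in $X\setminus\{v\}$) and applies the lemma. Your argument instead exploits the tubular structure directly: you locate a square in $X_i$, use the midcircle of its ambient tube to drag the whole tube into $X_i$, and then invoke $\pi_1$-injectivity of that vertical hyperplane. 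This is essentially the proof of \cref{lemma_thickness_two_infinite_pi1} specialised to vertical hyperplanes and carried out inline. It works, but the paper's route is slightly cleaner and more modular, and it avoids the tube-propagation step you flagged as the main technical obstacle: once thickness is known to pass to $X_i$, the lemma applies to $X_i$ without ever needing to identify which tube a given square belongs to.
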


We will need the following result. 

\begin{lemma}\label{lemma_thickness_two_infinite_pi1}
Let $Z$ be a compact, connected nonpositively curved square complex which has at least one edge. If each edge of  $Z$  is contained in at least two squares, then $\pi_1(Z)$ is infinite. 
\end{lemma}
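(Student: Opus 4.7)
The plan is to argue by contradiction: assume $\pi_1(Z)$ is finite, pass to the universal cover, and exhibit an edge of thickness one by a hyperplane argument inside the resulting compact $\cat$ cube complex.

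First I would set up the $\cat$ framework. If $G := \pi_1(Z)$ is finite, then the universal cover $p \colon \wt{Z} \to Z$ is a finite cover, so $\wt{Z}$ is itself compact. Nonpositive curvature is a local condition preserved by covers, hence $\wt{Z}$ is a compact simply connected nonpositively curved (therefore $\cat$) square complex. The hypothesis that every edge is contained in at least two squares is also local and lifts to $\wt{Z}$, which still has at least one edge.

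Next I would analyse a hyperplane of $\wt{Z}$. Pick any edge $\wt{e}$ and let $h$ be its dual hyperplane. In a $\cat$ cube complex, hyperplanes are embedded convex subcomplexes; since $\wt{Z}$ is $2$-dimensional, $h$ is therefore a connected $\cat$ graph, hence a tree, and it is finite because $\wt{Z}$ is compact. The key identification is combinatorial: vertices of $h$ are in bijection with edges in the parallelism class $[\wt{e}]$, and at each such vertex the degree in $h$ equals the number of squares of $\wt{Z}$ containing the corresponding edge, i.e.\ its thickness.

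Combining the two, every vertex of the tree $h$ has degree at least $2$; in particular $h$ has at least one edge, since the vertex corresponding to $\wt{e}$ already has $\ge 2$ neighbours. But a non-empty finite tree must have a leaf, contradicting the lower bound on degrees. Hence $\pi_1(Z)$ is infinite. The only substantive point that needs care is the dictionary between vertex-degrees in the hyperplane and thicknesses of the edges of $\wt{Z}$; everything else is a direct appeal to standard facts about $\cat$ cube complexes.
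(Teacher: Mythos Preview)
Your argument is correct and uses the same underlying observation as the paper: in the dual hyperplane, each vertex has valence equal to the thickness of the corresponding edge, hence at least two. The organization differs, however. The paper works directly in $Z$: the hyperplane $\mathsf{h}_e \subset Z$ is a finite graph with every vertex of valence $\ge 2$, so $\pi_1(\mathsf{h}_e)$ is a free group of positive rank; since any lift of $\mathsf{h}_e$ embeds as a hyperplane of the $\cat$ universal cover (Sageev), the map $\pi_1(\mathsf{h}_e)\to\pi_1(Z)$ is injective, giving an infinite (indeed non-trivial free) subgroup of $\pi_1(Z)$. Your route is the contrapositive upstairs: if $\pi_1(Z)$ were finite then $\wt Z$ would be compact $\cat$, so each hyperplane would be a \emph{finite tree}, contradicting the minimum-valence-two condition. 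Both rely on Sageev's structure theory of hyperplanes; the paper's version has the mild bonus of exhibiting an explicit infinite (free) subgroup, while yours avoids quoting $\pi_1$-injectivity of hyperplanes and instead uses only that hyperplanes in a $\cat$ square complex are trees.
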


\begin{proof}
Let $e$ be an edge of $Z$ . The hyperplane $\mathsf{h}_e$ dual to $e$ is a finite connected graph in which every vertex is of valence at least two, by the hypothesis on  $Z$ . This implies that $\pi_1(\mathsf{h}_e)$ is a free group of positive rank. 
Any lift of $\mathsf{h}_e$ embeds as a hyperplane in $\widetilde{Z}$, since $\wt{Z}$ is $\cat$ (Theorem 4.10 of \cite{sageev}). This implies that $\pi_1(\mathsf{h}_e) \hookrightarrow \pi_1(Z)$. Hence the result.
\end{proof}

\begin{proof}[Proof of \cref{prop_1ended_is_BM1}]
Let $u \in X_{s}$ be a vertex whose link is not connected. This implies that $\st(u) \setminus \{u\}$ is not connected. The result then follows from \cref{lemma_free_amalgam_hnn}. Indeed, if $X = X_1 \vee_u X_2$, then $X_i$ is not simply connected by \cref{lemma_thickness_two_infinite_pi1}.
\end{proof}

\section{The second Brady-Meier criterion}
In this section, we will assume that each edge of $X$ has thickness at least two and every vertex link is connected, but $X$ does not satisfy the second Brady-Meier criterion \ref{BM2}. We will explain how to simplify $X$ in this case by defining an opening of the complex at a vertex whose link does not satisfy \ref{BM2}. Fix one such vertex $u \in X_s \subset X$.

A vertex of $\link(u)$ is \emph{vertical (horizontal)} if it is a vertical (horizontal) half-edge incident to $u$ in $X$. 
Observe that the horizontal vertices have valence exactly two.

\begin{lemma}\label{lemma_bm2_at_superdivision}
$X$ does not satisfy \ref{BM2} at $u$ if and only if a vertical vertex of $\link(u)$ disconnects $\link(u)$.
\end{lemma}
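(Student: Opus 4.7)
The forward implication is immediate: a vertical vertex of $\link(u)$ is a $0$-simplex of $\link(u)$, so if its removal disconnects $\link(u)$ then \ref{BM2} fails at $u$.

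For the converse, assume \ref{BM2} fails, so some simplex $\sigma\subset\link(u)$ has disconnected complement. Since $\link(u)$ is a one-dimensional simplicial complex, $\sigma$ is a vertex or an edge. Throughout I would use two valence facts: a horizontal vertex of $\link(u)$ has valence exactly $2$ (since every horizontal edge of $X$ has thickness two), and a vertical vertex has valence at least $2$ (from the standing thickness $\geq 2$ hypothesis). The plan is to handle the three cases for $\sigma$, producing in each a vertical vertex $v^*$ together with two specific vertices of $\link(u)\setminus\{v^*\}$ that lie in distinct components. The case when $\sigma$ is a vertical vertex is trivial.

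If $\sigma=\{h\}$ is a horizontal vertex, its two vertical neighbors $v_1,v_2$ must be distinct, for otherwise $h$ would attach to $\link(u)$ only via a multi-edge to a single vertex and its removal from a connected graph could not disconnect. Because $h$ is a cut vertex, $v_1,v_2$ lie in distinct components $C_1,C_2$ of $\link(u)\setminus\{h\}$. By the valence $\geq 2$ hypothesis, $v_1$ admits another horizontal neighbor $h'\neq h$, which lies in $C_1$. I would then take $v^*=v_1$ and argue that $h,h'$ are separated in $\link(u)\setminus\{v_1\}$: any path from $h$ would have to leave $h$ through $v_2$, and its tail would then be a path from $v_2$ to $h'$ in $\link(u)\setminus\{h\}$, contradicting $C_1\neq C_2$. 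If $\sigma$ is an edge $e=(v,h)$, an essentially identical argument works with $v^*=v$: the other horizontal neighbor $v'$ of $h$ must be distinct from $v$ (else $e$ could not be a cut edge), $v$ has another horizontal neighbor $h''\neq h$ by valence $\geq 2$, and any path from $h$ to $h''$ in $\link(u)\setminus\{v\}$ would avoid $v$ and hence the edge $e$, placing it inside $\link(u)\setminus e$ and contradicting the fact that $h$ and $h''$ lie in different components there.

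The only real subtlety is ensuring that the vertical vertex produced in the reductions above actually disconnects $\link(u)$. This relies crucially on the valence $\geq 2$ condition on vertical vertices, which provides the auxiliary horizontal neighbor ($h'$ or $h''$) needed to witness the disconnection; if $v_1$ or $v$ were a leaf of $\link(u)$, the candidate would merely be a pendant vertex and the argument would collapse.
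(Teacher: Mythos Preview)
Your argument is correct and follows essentially the same route as the paper: in each of the two nontrivial cases (a horizontal cut vertex $h$, or a cut edge $e=(v,h)$) you use the valence constraints on $\link(u)$ to produce a vertical vertex whose removal separates two explicit horizontal vertices, just as the paper does. The only slips are cosmetic: you have interchanged the labels ``forward'' and ``converse'' (the trivial direction you prove first is the \emph{if}, not the \emph{only if}), and in the edge case $v'$ is the other \emph{vertical} neighbor of $h$, not a horizontal one.
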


\begin{proof}
One direction is clear. For the converse, there are two cases to consider: either a horizontal vertex $h$ or an edge $e$ of $\link(u)$ disconnects $\link(u)$.

In the first case, let $v_1$ and $v_2$ be the two vertical vertices adjacent to $h$. Let $C$ be the component of $\link(u) \setminus \{h\}$ that contains $v_1$. Let $x \neq v_1 \in C$ be a vertex. Such a vertex exists as every edge of $X$ has thickness at least two. Then any path in $\link(u)$ from $x$ to $v_2$ meets $h$, and hence meets $v_1$. Thus $v_1$ disconnects $\link(u)$.
In the second case, since horizontal vertices have valence two, it is easy to see that the unique vertical vertex incident on $e$ disconnects $\link(u)$.
 \end{proof}

\subsection*{The opening procedure}
Throughout, we will denote an edge incident to $u$ and the corresponding vertex in $\link(u)$ by the same notation. Let $e$ be a vertical edge incident to $u$ which disconnects $\link(u)$. Let $C_1, \cdots, C_n$ denote the maximal connected subgraphs of $\link(u) \setminus e$ (\cref{fig_link(u)}), where maximality is by inclusion. We will denote by $f_{i1}, \cdots, f_{ik_i}$ the vertical vertices in $C_i$ so that the vertical edges $f_{i1}, \cdots, f_{ik_i}$ incident on $u$ belong to $\st(u,X_s)$ (\cref{fig_opening_X_s}). Let $x_{ij}$ denote the other endpoint of $f_{ij}$.
We will now explain how to open $\st(u,X_s)$:

\begin{figure}
\begin{center}
\begin{tikzpicture} [scale = 1.75]
\filldraw [red] (1,1) circle (2pt) ;
\filldraw [blue] (0,0) circle (1pt) ;
\filldraw [blue] (0.3,-0.2) circle (1pt) ;
\filldraw [blue] (1,-0.3) circle (1pt) ;
\filldraw [blue] (1.3,-0.2) circle (1pt) ;
\filldraw [blue] (1.9,0) circle (1pt) ;
\filldraw [blue] (2.2,0.1) circle (1pt) ;
\draw (1,1) -- (0,0) ;
\draw (1,1) -- (0.3,-0.2) ;
\draw (1,1) -- (1,-0.3) ;
\draw (1,1) -- (1.3,-0.2) ;
\draw (1,1) -- (1.9,0) ;
\draw (1,1) -- (2.2,0.1) ; 
\draw [blue] (0.15,-0.1) circle [radius = 0.45];
\draw [blue] (1.15,-0.25) circle [radius = 0.45];
\draw [blue] (2.05,0.05) circle [radius = 0.45];
\node [above] at (1,1) {$e$}; 
\node [left] at (0,0) {${f_{11}}$};
\node [below] at (0.3,-0.2) {${f_{1k_1}}$};
\node [below] at (1,-0.3) {${f_{21}}$};
\node [below right] at (1.3,-0.2) {${f_{2k_2}}$};
\node [below] at (1.9,0) {${f_{n1}}$};
\node [below right] at (2.2,0.1) {${f_{nk_n}}$};
\draw [dotted, thick] (0.2,0.2) -- (0.4,0) ;
\draw [dotted, thick] (1,-0.1) -- (1.26,-0.1) ;
\draw [dotted, thick] (1.75,0.2) -- (1.95,0.3) ;
\draw [dotted, thick] (1.15,0.5) -- (1.35,0.55) ;
\node [below] at (0.1,-0.6) {$C_1$} ;
\node [below] at (1.15,-0.75) {$C_2$} ;
\node [below] at (2.25,-0.5) {$C_n$} ;
\end{tikzpicture}
\caption{$\link(u)$} \label{fig_link(u)}
\end{center}
\end{figure}

\begin{figure}
\begin{center}
\begin{tikzpicture} [scale = 2]

\begin{scope}[xshift=-3cm]
\filldraw (0,0) circle (2pt) ;
\draw [red] (0,0) -- (0,1) ;
\filldraw (0,1) circle (1.5pt) ;
\filldraw (-1,0) circle (1pt) ;
\filldraw (-0.9,-0.2) circle (1pt) ;
\filldraw (-0.4,-0.6) circle (1pt) ;
\filldraw (-0.6,-0.5) circle (1pt) ;
\filldraw (0.9,-.2) circle (1pt) ;
\filldraw (1,0) circle (1pt) ;
\node [above right] at (0,0) {$u$} ; 
\node [right] at (0,1) {$v$} ; 
\node [above left] at (-1,0) {$x_{11}$} ; 
\node [left] at (-0.9,-0.2) {$x_{1k_1}$} ; 
\node [left] at (-0.6,-0.5) {$x_{21}$} ; 
\node [below right] at (-0.4,-0.6) {$x_{2k_2}$} ; 
\node [below right] at (0.9,-0.2) {$x_{n1}$} ; 
\node [right] at (1,0) {$x_{nk_n}$} ; 
\draw [blue] (0,0) -- (-1,0) ;
\draw [blue] (0,0) -- (-0.9, -0.2) ;
\draw [blue] (0,0) -- (-0.4,-0.6) ;
\draw [blue] (0,0) -- (-0.6,-0.5) ;
\draw [blue] (0,0) -- (0.9,-0.2) ;
\draw [blue] (0,0) -- (1,0) ;
\draw [dotted,thick] (-0.8,0) -- (-0.8,-0.15) ;
\draw [dotted,thick] (-0.48,-0.4) -- (-0.4,-0.5);
\draw [dotted,thick] (0.8,-0.15) -- (0.85, -0.03);
\draw [dotted,thick] (-0.0,-0.25) -- (0.3,-0.2);
\node [right] at (0,0.5) {$e$};
\node [above] at (-0.5,0) {$f_{11}$};
\node [right] at (-0.55,-0.1) {$f_{1k_1}$};
\node [left] at (-0.3,-0.3) {$f_{21}$};
\node [right] at (-0.2,-0.3) {$f_{2k_2}$};
\node [below] at (0.45,-0.1) {$f_{n1}$};
\node [above] at (0.5,0) {$f_{nk_n}$};
\node at (0,-1) {\parbox{0.3\linewidth}{\subcaption{$\st(u,X_s)$}}};
\end{scope}

\begin{scope}
\filldraw (-0.25,0) circle (1pt) ;
\draw [red] (-0.25,0) -- (0,1) ;
\draw [red] (0,0) -- (0,1) ;
\draw [red] (0.4,0) -- (0,1) ;
\filldraw (0,0) circle (1pt) ;
\filldraw (0.4,0) circle (1pt) ;
\filldraw (0,1) circle (1pt) ;
\filldraw (-1,0) circle (1pt) ;
\filldraw (-0.9,-0.2) circle (1pt) ;
\filldraw (-0.4,-0.6) circle (1pt) ;
\filldraw (-0.6,-0.5) circle (1pt) ;
\filldraw (1.2,-.2) circle (1pt) ;
\filldraw (1.3,0) circle (1pt) ;
\node [above left] at (-0.25,0) {$u_1$} ; 
\node [below right] at (0,0) {$u_2$};
\node [above left] at (0.4,0) {$u_n$};
\node [right] at (0,1) {$v'$} ; 
\node [above left] at (-1,0) {$x'_{11}$} ; 
\node [left] at (-0.9,-0.2) {$x'_{1k_1}$} ; 
\node [left] at (-0.6,-0.5) {$x'_{21}$} ; 
\node [below right] at (-0.4,-0.6) {$x'_{2k_2}$} ; 
\node [below right] at (1.2,-0.2) {$x'_{n1}$} ; 
\node [right] at (1.3,0) {$x'_{nk_n}$} ; 
\draw [blue] (-0.25,0) -- (-1,0) ;
\draw [blue] (-0.25,0) -- (-0.9, -0.2) ;
\draw [blue] (0,0) -- (-0.4,-0.6) ;
\draw [blue] (0,0) -- (-0.6,-0.5) ;
\draw [blue] (0.4,0) -- (1.2,-0.2) ;
\draw [blue] (0.4,0) -- (1.3,0) ;
\draw [dotted, thick] (-0.8,0) -- (-0.8,-0.15) ;
\draw [dotted, thick] (-0.48,-0.4) -- (-0.4,-0.5) ;
\draw [dotted, thick] (0.8,-0.10) -- (0.85, -0.03) ;
\draw [dotted,thick] (0.1,-0.25) -- (0.6,-0.2);
\node [right] at (0,0.5) {$e_2$};
\node [left] at (-0.12,0.5) {$e_1$};
\node [right] at (0.2,0.5) {$e_n$} ;
\node [above] at (-0.7,0) {$f'_{11}$};
\node [right] at (-0.75,-0.2) {$f'_{1k_1}$};
\node [above] at (-0.3,-0.3) {$f'_{21}$};
\node [right] at (-0.2,-0.3) {$f'_{2k_2}$};
\node [right] at (0.7,-0.2) {$f'_{n1}$};
\node [above] at (0.8,0) {$f'_{nk_n}$};
\node at (.4,-1) {\parbox{0.3\linewidth}{\subcaption{$T_u$}}};
\end{scope}
\end{tikzpicture}
\caption{Opening $\st(u,X_s)$ to $T_u$} \label{fig_opening_X_s}
\end{center}
\end{figure}

\begin{defn} 
The \emph{opening of $\st(u,X_s)$} along $e$ is a tree $T_u$ (\cref{fig_opening_X_s}) defined as follows: There is one `primary' vertex $v'$ out of which emit $n$ edges $e_1,\cdots ,e_n$, one for each $C_i$. For each $i$, we label the other endpoint of $e_i$ as $u_i$. From each $u_i$, we have $k_i$ edges to the vertices $x'_{i1},\cdots ,x'_{ik_i}$ (compare with $\st(u,X_s)$). We label these edges as $f'_{i1},\cdots ,f'_{ik_i}$. 
The \emph{opened-up graph of $X_s$} along $e$ is a graph $X'_s$ obtained by replacing $\mathring{\st}(u,X_s)$ in $X_s$ by $T_u$, with the obvious identifications. 
\end{defn}

Clearly, $X'_{s}$ is connected, $X_{s} \setminus \mathring{\st}(u,X_s) \hookrightarrow X'_{s}$ and $T_u \hookrightarrow X'_{s}$. There is a natural surjective map from $X'_s$ to $X_s$ which sends each $e_i$ in $T_u$ to $e$. Note that the graphs $X'_{s}$ and $X_s$ are homotopy equivalent. 

\subsection*{Construction}
We now construct a new tubular graph of graphs $X'$ with the same underlying graph $\Gamma_X$ as $X$ and the only change is that $X'_s$ replaces $X_s$. An attaching map of an edge graph is unchanged if $u$ is not in the image, as $X_s \setminus \mathring{\st}(u,X_s)$ embeds in $X'_s$. If $u$ is in the image, we do the obvious modification (see \cref{fig_modifying_attaching_map} for an illustration).  

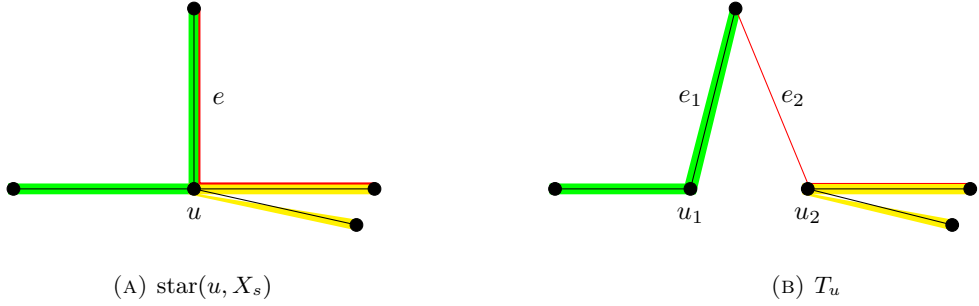
\begin{figure}
\begin{center}
\begin{tikzpicture} [scale = 2.4]
\begin{scope}[xshift=-2.5cm]
\fill [green] (-.03,1) to (.03,1) to (.03,-0.03) to (-1.03,-0.03) to (-1.03,0.03) to (-.03,0.03) to (-.03,1);
\fill [yellow] (1,.03) to (1,-0.03) to (0.03,-0.03) to (0.9,-0.17) to (0.9, -0.23) to (-0.03,-0.03) to (-0.03,0.03) to (1,0.03);
\draw[thick, red] (1,.03) to (.03,.03) to (.03,1);

\filldraw (0,0) circle (1pt) ;
\draw (0,0) -- (0,1) ;
\filldraw (0,1) circle (1pt) ;
\filldraw (-1,0) circle (1pt) ;
\filldraw (0.9,-.2) circle (1pt) ;
\filldraw (1,0) circle (1pt) ;

\draw (0,0) -- (-1,0) ;
\draw (0,0) -- (0.9,-0.2) ;
\draw (0,0) -- (1,0) ;
\node[below] at (0,-0.05) {$u$};
\node[right] at (0.05,0.5) {$e$};

\node at (0,-.5) {\parbox{0.3\linewidth}{\subcaption{$\st(u,X_s)$}}};
\end{scope}

\begin{scope}
\fill [green] (-.03,1) to (.03,1) to (-.22,-0.03) to (-1.03,-0.03) to (-1.03,0.03) to (-.28,0.03) to (-.03,1);
\fill [yellow] (1.3,.03) to (1.3,-0.03) to (0.43,-0.03) to (1.2,-0.17) to (1.2, -0.23) to (0.37,-0.03) to (0.37,0.03) to (1.3,0.03);

\filldraw (-0.25,0) circle (1pt) ;
\draw (-0.25,0) -- (0,1) ;
\draw [red] (0,1) to (0.4,0.03) to (1.3,0.03);

\filldraw (0.4,0) circle (1pt) ;
\filldraw (0,1) circle (1pt) ;
\filldraw (-1,0) circle (1pt) ;
\filldraw (1.2,-.2) circle (1pt) ;
\filldraw (1.3,0) circle (1pt) ;

\node[below] at (-0.25,-0.05) {$u_1$};
\node[below] at (0.4,-.05) {$u_2$};
\node[left] at (-.13,.5) {$e_1$};
\node[right] at (.2,.5) {$e_2$};

\draw  (-0.25,0) -- (-1,0) ;
\draw  (0.4,0) -- (1.2,-0.2) ;
\draw  (0.4,0) -- (1.3,0) ;

\node at (.4,-.5) {\parbox{0.3\linewidth}{\subcaption{$T_u$}}};
\end{scope}
\end{tikzpicture}
\caption{A highlighted path of each colour indicates a part of the image of an attaching map} \label{fig_modifying_attaching_map}
\end{center}
\end{figure}

\begin{notation} The tubular graph of graphs $X'$ is called an \emph{SL-complex} (simplified link complex) of $X$, simplified link in the sense that the vertex $u$ has been replaced by $u_1, \cdots, u_n$ where for each $i$, $\link(u_i)$ is simpler than $\link(u)$.
\end{notation}

There exists a natural map from $X'$ to $X$. Further,

\begin{prop} The tubular graphs of graphs $X$ and $X'$ are homotopy equivalent. \qed
\end{prop}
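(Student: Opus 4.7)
My plan is to show $X \simeq X'$ by collapsing a contractible subcomplex in each, and observing that the two resulting quotients are homeomorphic as CW complexes.

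First I would identify the contractible subcomplexes. The closed star $\st(u,X_s)$ is a tree in the graph $X_s$ (a central vertex with incident edges and their far endpoints), and hence a contractible subcomplex of $X$. The subspace $T_u \subset X'$ is a tree by construction, so it is contractible as a subcomplex of $X'$. Since collapsing a contractible subcomplex of a CW complex is a homotopy equivalence (via the homotopy extension property for CW pairs), the quotient maps $X \to X/\st(u,X_s)$ and $X' \to X'/T_u$ are both homotopy equivalences.

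Next, I would exhibit a homeomorphism $X'/T_u \cong X/\st(u,X_s)$ via the natural surjection $\pi\colon X' \to X$. On $X'_s$, define $\pi$ by $u_i \mapsto u$, $e_i \mapsto e$, $v' \mapsto v$, $x'_{ij} \mapsto x_{ij}$, $f'_{ij} \mapsto f_{ij}$, extended linearly on each $e_i$ and $f'_{ij}$ and identically on $X_s \setminus \mathring{\st}(u,X_s)$; extend $\pi$ by the identity on every other vertex space and every edge space. The modifications of the attaching maps in the opening procedure are tailored precisely so that $\pi \circ \partial'_{a,s} = \partial_{a,s}$ for every edge $a$ incident to $s$, which makes $\pi$ a well-defined continuous cellular map. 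Since $\pi^{-1}(\st(u,X_s)) = T_u$ and $\pi$ restricts to a cellular bijection on complements, it descends to a cellular bijection $\bar\pi\colon X'/T_u \to X/\st(u,X_s)$, hence a homeomorphism. Chaining the three equivalences gives $X \simeq X/\st(u,X_s) \cong X'/T_u \simeq X'$.

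The step that needs genuine care is the well-definedness of $\pi$, i.e., the verification that every passage of an attaching map through $u$ gets lifted unambiguously into $T_u$. This rests on the fact that horizontal vertices of $\link(u)$ have valence exactly two: any horizontal half-edge at $u$ sits between two vertical half-edges, and all three vertices of this length-two path lie in a single component $C_i$ of $\link(u)\setminus e$. Consequently, whenever the image of an attaching map (of an edge space, or equivalently of a square) enters $u$ along one vertical edge and leaves along another, both vertical edges belong to the same $C_i$ and the rerouted image lies inside the subtree of $T_u$ incident to $u_i$. Passages that traverse $e$ itself are lifted to the corresponding $e_i$, with the index $i$ determined by the other vertical edge at $u$ appearing in the map. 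With this consistency established, the cellular bijection above is automatic and the proof concludes.
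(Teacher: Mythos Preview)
Your argument is correct. The paper itself offers no proof at all for this proposition---it is stated with a bare \qed, relying on the preceding remark that $X'_s$ and $X_s$ are homotopy equivalent and that there is a natural surjection $X'\to X$. Your proposal fills in precisely what the paper leaves implicit: you collapse the contractible subcomplexes $\st(u,X_s)\subset X$ and $T_u\subset X'$ and identify the quotients via the natural map $\pi$. The verification that $\pi^{-1}(\st(u,X_s))=T_u$ and that $\pi$ is a cellular bijection off these sets is straightforward, and your final paragraph correctly isolates the one nontrivial point (consistency of the lifted attaching maps), which is exactly the content of the ``obvious modification'' the paper alludes to in the construction of $X'$. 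In short, you have supplied a clean proof where the paper asserts the result without argument; there is nothing to correct.
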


Since the number of edges of $X'_{s}$ is strictly greater than the number of edges of $X_{s}$, we have
\begin{lemma} \label{lemma_opening_not_isomorphic}
$X'$ is not isomorphic to $X$ as square complexes. \qed
\end{lemma}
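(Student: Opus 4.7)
The plan is to simply verify the edge-counting claim that precedes the lemma and then observe that any isomorphism of square complexes must preserve the number of $1$-cells. So the proof is essentially arithmetic, and the work is in making the count precise.

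First I would count the edges of $\st(u,X_s)$: by construction these are the edge $e$ together with the edges $f_{ij}$ for $1\le i\le n$ and $1\le j\le k_i$, giving $1+\sum_{i=1}^n k_i$ edges. Next I would count the edges of $T_u$: these are the $n$ edges $e_1,\dots,e_n$ emanating from the primary vertex $v'$, together with the edges $f'_{ij}$ for $1\le i\le n$ and $1\le j\le k_i$, giving $n+\sum_{i=1}^n k_i$ edges. Since $e$ was chosen to be a vertical edge of $X_s$ whose corresponding vertex of $\link(u)$ disconnects $\link(u)$, by \cref{lemma_bm2_at_superdivision} the subgraph $\link(u)\setminus e$ has at least two maximal connected components, i.e.\ $n\ge 2$. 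Thus $T_u$ has at least one more edge than $\st(u,X_s)$.

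Because $X'_s$ is obtained from $X_s$ by replacing $\mathring{\st}(u,X_s)$ with $T_u$ while leaving $X_s\setminus\mathring{\st}(u,X_s)$ untouched, the total number of $1$-cells in $X'_s$ exceeds that of $X_s$ by $n-1\ge 1$. All other vertex graphs, the edge graphs, and the underlying graph $\Gamma_X$ are unchanged in the construction of $X'$, so the inequality on edge counts passes to the full complexes: $X'$ has strictly more $1$-cells than $X$.

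Finally, an isomorphism of square complexes is in particular a cellular homeomorphism, and hence induces a bijection on cells of each dimension. Since $X'$ and $X$ have different numbers of $1$-cells, no such isomorphism can exist. I do not expect any genuine obstacle here; the only subtlety is checking that the construction truly alters nothing outside $\mathring{\st}(u,X_s)$, which is built into the definition of $X'$ given above.
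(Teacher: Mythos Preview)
Your proposal is correct and follows exactly the approach indicated in the paper: the sentence preceding the lemma already states that $X'_s$ has strictly more edges than $X_s$, and the \qed marks the lemma as an immediate consequence. You have simply made the edge count explicit (showing the difference is $n-1\ge 1$) and spelled out why unequal $1$-cell counts preclude an isomorphism.
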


\begin{lemma} \label{lemma_no_squares_equal}
Every edge of $X'$ belongs to at least one square and the number of squares in $X'$ is the same as the number of squares in $X$.
\end{lemma}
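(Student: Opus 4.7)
The plan is to set up an explicit bijection between the squares of $X$ and the squares of $X'$ and to check, as a byproduct of the construction, that each edge of $X'$ is incident to at least one square. The structural fact I will exploit repeatedly is that $X$ is a nonpositively curved $\VH$-complex, so no vertex link contains a bigon; in particular, every square of $X$ has at most one corner at $u$ and at most one at $v$ (the other endpoint of $e$). This disambiguates the opening procedure square by square.

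First I would describe the bijection. A square of $X$ with no corner at $u$ is left alone: its edges already lie in $X_s \setminus \mathring{\st}(u, X_s) \hookrightarrow X'_s$ or else outside $X_s$. A square with a corner at $u$ corresponds to an edge of $\link(u)$ between a horizontal half-edge $h$ and a vertical half-edge, where $h$ belongs to a unique component $C_i$; the corner is reassigned to $u_i$, with vertical side $f'_{ij}$ (when the original vertical half-edge is $f_{ij}$) or $e_i$ (when it is $e$). In the latter case the square also has a corner at $v$, which is reassigned to $v'$ using the half-edge of $e_i$ at $v'$. This is exactly the reassignment realised by the modified attaching maps of the tubes in the construction of $X'$, and it is clearly reversible.

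To verify the edge condition, the only nontrivial edges to check are the new ones introduced in $T_u$. For an edge $f'_{ij}$, every square of $X$ containing $f_{ij}$ has its $u$-corner adjacent in $\link(u)$ to a horizontal half-edge which, being in the same component $C_i$ as $f_{ij}$, is sent by the bijection to a square of $X'$ containing $f'_{ij}$; thus $f'_{ij}$ inherits the thickness of $f_{ij}$ and has thickness at least two. For the edge $e_i$, I would invoke the connectedness of $\link(u)$: since $C_i$ is a connected component of $\link(u)\setminus e$ while $\link(u)$ itself is connected, there must exist at least one edge of $\link(u)$ joining $e$ to a vertex of $C_i$, and by bipartiteness that vertex is a horizontal half-edge $h \in C_i$. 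The square of $X$ corresponding to this edge $(e,h)$ contains $e$, and the bijection sends it to a square of $X'$ containing $e_i$. Bijectivity then yields immediately that $X$ and $X'$ have the same number of squares.

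The main obstacle I anticipate is the bookkeeping around tubes that visit $u$ several times or whose edge graph $X_a$ attaches at $s$ via both endpoints; but since the no-bigon condition makes the reassignment at each corner independent, the bijection is well-defined corner by corner and there is no global obstruction.
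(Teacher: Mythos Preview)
Your argument is correct and follows the same route as the paper: the paper counts squares more directly by observing that in both $X$ and $X'$ they are indexed by the edges of the (unchanged) cyclic edge graphs, and then checks---exactly as you do---that vertical edges outside $T_u$ and the $f'_{ij}$ retain thickness at least two while each $e_i$ lies in a square because $e$ is joined to $C_i$ in $\link(u)$. One small remark: the no-bigon condition does not actually prevent a square from having two corners at $u$ (this can occur when the underlying graph has a loop at $s$), but your final paragraph already handles this correctly by making the reassignment corner by corner.
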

\begin{proof}
The number of squares in $X'$ is equal to the total number of edges in the cyclic edge graphs, which is equal to the number of squares of $X$.

If a vertical edge does not belong to $T_u$, then since the attaching maps are unchanged from $X$, the vertical edge belongs to at least two squares. Similarly, each $f'_{ij}$ belongs to at least two squares. The edge $e_i$ belongs to a square if and only if a pair of adjacent edges $y_{\lambda_1}, y_{\lambda_2}$ in some edge graph is mapped to a pair $e, f_{ij}$. Such a pair exists as there is an edge between $e$ and ${f_{ij}}$ in $\link(u)$.
\end{proof}

\begin{remark} \label{rmk_no_rudimentary} It is possible that an edge $e_i$ incident to a $u_i$ in $T_u$ is of thickness one in $X'$. However, $e_i$ cannot be rudimentary as $X'_{s}$ is not a circle. In that case, by \cref{prop_thickness1}, $\wt{X}$ is not one-ended.
\end{remark}

\section{The Algorithm}

\begin{defn}
A tubular graph of graphs is \emph{wedge-like} if
\begin{enumerate}
\item it has no hanging trees or rudimentary edges, and
\item either there exists a vertex whose link is not connected or there exists an edge of thickness one.
\end{enumerate}  
\end{defn}

\begin{remark} \label{rmk_wedge_not_one_ended} By \cref{prop_thickness0}, \cref{prop_thickness1} and \cref{prop_1ended_is_BM1}, the fundamental group of a  wedge-like tubular graph of graphs is not one-ended. 
\end{remark}

\begin{theorem} [Main Theorem; \cref{thm_ends_main_intro}] \label{thm_ends_main}
There is an explicit algorithm of polynomial time complexity that takes a tubular graph of graphs as input and returns a homotopy equivalent tubular graph of graphs which is either Brady-Meier, or is a point, or is wedge-like.
\end{theorem}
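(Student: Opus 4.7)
The algorithm runs a main loop alternating between two kinds of simplification already developed in the excerpt: the normalizations removing hanging trees and rudimentary edges from \cref{section_setup}, and the SL-move from the preceding section. Given a tubular graph of graphs $X$, first produce a homotopy equivalent $Y$ with no hanging trees and no rudimentary edges by applying the lemmas of \cref{section_setup}. If $Y$ is a point, return it. Otherwise test in order: (a) does some edge have thickness $\leq 1$? (b) is some vertex link disconnected? (c) does every vertex satisfy \ref{BM2}? In cases (a) or (b) the complex $Y$ is wedge-like by definition (a thickness-$0$ edge forces disconnected links at both of its endpoints, so this still falls under condition (2)) and we return it; in case (c) it is Brady-Meier and we return it. Otherwise some vertex $u$ fails \ref{BM2}, and by \cref{lemma_bm2_at_superdivision} there is a vertical vertex $e$ of $\link(u)$ that disconnects $\link(u)$; we perform the SL-move at $u$ along $e$, replace $Y$ by the resulting homotopy equivalent complex, and iterate.

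For termination I would use the lexicographic pair $(s(Y), v(Y))$, where $s(Y)$ is the number of squares and $v(Y)$ is the number of vertical edges. Removing a rudimentary edge strictly decreases $s(Y)$, while removing a hanging tree does not increase it. An SL-move preserves $s(Y)$ by \cref{lemma_no_squares_equal} and strictly increases $v(Y)$ by $n - 1 \geq 1$, since the tree $T_u$ contains exactly $n - 1$ more vertical edges than $\st(u, X_s)$ and $n \geq 2$ because $e$ disconnects $\link(u)$. The critical observation is that once every edge of $Y$ has thickness $\geq 2$, double counting (vertical edge, containing square) incidences gives $2 v(Y) \leq 2 s(Y)$, so $v(Y) \leq s(Y)$. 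Hence the SL-move subloop at fixed square-count runs at most $s(Y)$ times before one of (a)--(c) triggers a return, and each iteration uses only connectedness and cut-vertex tests on vertex links and on $Y$ itself, all polynomial-time, yielding overall polynomial complexity.

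The main obstacle I expect is controlling the side effects of the SL-move. By \cref{rmk_no_rudimentary} it can create thickness-$1$ edges (though never rudimentary ones), and it may conceivably introduce new \ref{BM2} failures or break link-connectivity at the newly inserted vertices $u_1, \ldots, u_n, v'$. The delicate point is to show that after re-entering step (a) and renormalizing, the lex pair $(s, v)$ still decreases strictly on each full cycle: any rudimentary-edge removal strictly drops $s(Y)$, after which $v(Y)$ may reset upward but is again bounded by the new smaller $s(Y)$, while in the absence of such removals the SL-move itself strictly increases $v(Y)$. With this bookkeeping in hand, the three-way dichotomy of the theorem follows by exhausting the cases (a), (b), (c), or by reaching a point as the algorithm halts.
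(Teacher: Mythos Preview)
Your proposal is correct and follows essentially the same route as the paper: normalize once (remove hanging trees and rudimentary edges), then loop through the thickness/connectivity/\ref{BM2} tests and perform an SL-move when only \ref{BM2} fails, with termination coming from the fact that SL-moves preserve the square count while strictly increasing the number of vertical edges, and $v(Y)\le s(Y)$ once every edge has thickness $\ge 2$.

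Two small remarks. First, your ``lexicographic pair'' framing is slightly misleading: the pair $(s,v)$ does not decrease under SL-moves, it increases in the second coordinate; what you actually use (and correctly state later) is that $v$ is bounded above by $s$, so at most $s$ SL-moves occur --- exactly the paper's count. Second, your final paragraph worries about renormalizing after an SL-move, but this concern is unnecessary: \cref{lemma_no_squares_equal} guarantees every edge of the SL-complex lies in a square (so no hanging trees appear), and \cref{rmk_no_rudimentary} rules out rudimentary edges. Hence the paper's loop simply returns to the thickness test (your step (a)) without ever revisiting the initial normalizations, and the clean bound of at most $n$ SL-moves holds directly.
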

\begin{proof}
We will prove the theorem by constructing the algorithm. Let $X = X_0$ be the input tubular graph of graphs. Let $k \in \mathbb{N} \cup \{0\}$. 
\begin{enumerate}
\item[Step 1] Check if $X_k$ has hanging subtrees. If yes, collapse each hanging subtree to a point and call the new complex also as $X_k$. Go to the next step.

\item[Step 2] Check if $X_k$ has a rudimentary edge. If yes, remove tubes attached to rudimentary edges (\cref{lemma_removing_rudimentary_edges}) and call the resulting complex also as $X_k$. Go to the next step.

\item [Step 3] Check if $X_k$ has at least one square. If yes, go to the next step. Otherwise, $X_k$ is either a point or wedge-like. Stop.

\item[Step 4] Check if $X_k$ has an edge of thickness zero or one. If yes, $X_k$ is wedge-like. Stop. If not, go to the next step.

\item[Step 5] Check if the link of every vertex of $X_k$ is connected. If yes, go to the next step. If not, $X_k$ is wedge-like. Stop. 

\item[Step 6] Check if $X_k$ satisfies \ref{BM2}. If yes, $X_k$ is Brady-Meier. Stop. If not, go to the next step.

\item[Step 7] Replace $X_k$ by $X_{k+1} = X_k'$, an SL-complex of $X_k$, and go to Step 4. 
\end{enumerate}

We observe that for $l \gneq k \geq 1$, 
\begin{itemize}
\item[(i)] $X_k \ncong X_l$, as each opening increases the number of edges (\cref{lemma_opening_not_isomorphic}).
\item[(ii)] $X_k$ and $X_l$ have the same number of squares (\cref{lemma_no_squares_equal}). 
\item[(iii)] There is no edge of thickness zero in $X_k$ (\cref{lemma_no_squares_equal}).
\end{itemize} 
(i) implies that the procedure above does not return a tubular graph of graphs from an earlier step. Since there are only a finite number of connected square complexes with a fixed number of squares (ii) and no thickness zero edges (iii), the procedure stops in finite time.

Checking if a graph has hanging trees takes linear time in the number of vertices and edges of $X$. Similarly, checking for edges of thickness zero or one or for rudimentary edges takes linear time in the number of edges and squares of $X$. Thus steps 1 through 4 run in linear time in the number of vertices, edges and squares of $X$.

From step 5 onwards, the number of vertices and edges of $X_k$ is bounded by the number of squares of $X_k$, as each edge is contained in a square. 
Steps 5 and 6 run in polynomial (quadratic) time in the number of squares: indeed, the size of a vertex link in $X_k$ is bounded by the number of squares of $X_k$ and checking for connectedness and disconnecting vertices in a vertex link is linear in the number of vertices and edges of the link (see \cite{hopcroft_connected_graph} for details). 

If $n$ is the number of squares in $X$, we claim that the number of times the algorithm goes back to step 4 is at most $n$.
Indeed, the algorithm performs the $k^{th}$ opening-up only if every square of $X_{k-1}$ is of thickness at least two. 
When each edge is of thickness at least two, the number of vertical edges (as well as horizontal edges) of a tubular graph of graphs can be at most equal to the number of squares. 
Observe that the opening procedure in Step 7 increases the number of vertical edges of $X_k$ by at least one, while decreasing the thickness of certain vertical edges. Thus, the algorithm continues at most until each vertical edge is contained in exactly two squares. 
\end{proof}

As an immediate consequence, we have:

\begin{cor}[\cref{cor_grushko_intro}]\label{thm_actual_ends_algo}
There is an algorithm of polynomial time complexity which takes as input a tubular graph of graphs and returns a homotopy equivalent tubular graph of graphs obtained by gluing together certain vertices of a finite collection of Brady-Meier tubular graphs of graphs and finite graphs. 
Further, the free product decomposition induced by cutting along the glued vertices is the Grushko decomposition of the fundamental group of the input tubular graph of graphs. 
\end{cor}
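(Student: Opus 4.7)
The plan is to apply \cref{thm_ends_main} recursively: each time its output is wedge-like, split the complex at a locally disconnecting vertex to obtain strictly smaller pieces and recurse. Collecting all terminal Brady-Meier and finite graph pieces and re-gluing them at the splitting vertices produces the desired homotopy equivalent output.

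First I would run the algorithm of \cref{thm_ends_main} on $X$. If the result is Brady-Meier, then by \cref{prop_bradymeier} its universal cover is one-ended, so by Hopf's Lemma (\cref{cor_freee_amalgam_ends}) its fundamental group is freely indecomposable and non-free, and the recursion stops. If the output is a point, the fundamental group is trivial and recursion also stops. Otherwise the output is wedge-like: using the proof of \cref{prop_thickness1} (to handle the thickness-one case) and \cref{prop_1ended_is_BM1} (to handle a disconnected link), I can locate a vertex $c$ such that either $X = X_1 \vee_c X_2$ with both factors non-simply connected, or $\st(c) \setminus \{c\}$ is disconnected while $X \setminus \{c\}$ remains connected. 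In the wedge case I split $X$ into $X_1$ and $X_2$ and recurse on each; in the HNN case I cut at $c$ by separating the two sides of $\st(c)$, obtaining a tubular graph of graphs $Y$ with $\pi_1(X) \cong \pi_1(Y) * \mathbb{Z}$, and recurse on $Y$ while recording the $\mathbb{Z}$ factor.

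Termination follows from a strict decrease in complexity at each recursion step: in the wedge case each factor has strictly fewer squares than $X$ unless it is a pure graph (in which case it has no squares and contributes only to the free factor); in the HNN case the first Betti number of the underlying graph drops by one. Since \cref{thm_ends_main} runs in polynomial time and the number of recursive calls is bounded by the total number of squares of the original $X$ plus the first Betti number of its underlying graph, the overall complexity remains polynomial.

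Finally, I would re-glue all terminal Brady-Meier and graph pieces at the vertices where the splits were performed to produce the output complex, which is homotopy equivalent to $X$ by construction. Cutting at these glued vertices induces a free product decomposition $\pi_1(X) \cong F * G_1 * \cdots * G_k$, where $F$ is the free fundamental group of the wedge of graph pieces (incorporating also the $\mathbb{Z}$ factors recorded from the HNN cuts) and each $G_i$ is the fundamental group of a Brady-Meier piece, hence one-ended and therefore freely indecomposable and non-free; this is precisely the Grushko decomposition, and the argument uses neither Stallings' theorem nor the a priori existence of such a decomposition. The main obstacle I expect is the careful treatment of the HNN situation: verifying that the cut at $c$ produces a genuine tubular graph of graphs (rather than a more general square complex) and that the complexity monovariant decreases uniformly across both the wedge and HNN cases.
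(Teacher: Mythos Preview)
Your approach is essentially the paper's: run \cref{thm_ends_main}, cut at the resulting locally disconnecting point (handling the amalgam and HNN cases separately), and recurse, with polynomial termination coming from the global bound on the number of squares. One small correction: in the HNN case the first Betti number of the \emph{underlying graph} $\Gamma_X$ need not drop (the cut vertex $c$ can fail to separate its vertex graph $X_s$, in which case $\Gamma_X$ is unchanged); the invariant that does strictly decrease is the first Betti number of the square complex itself (equivalently, the rank of $H_1(X)$), and together with the square bound this gives the polynomial termination you want.
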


\begin{proof}
Let $X$ be the input tubular graph of graphs with fundamental group $G$. Apply the algorithm of \cref{thm_ends_main} to $X$.
Let $X_N$ be the output. If $X_N$ is a point, then $G$ is trivial. If $X_N$ is Brady-Meier, $G$ has trivial Grushko decomposition. 
If $X_N$ is wedge-like, $G$ is a free product (\cref{rmk_wedge_not_one_ended}). We first remove every edge of thickness one in $X_N$ by the procedure of \cref{fig: thickness one edge}.

Cut $X_N$ along an edge of thickness zero or a locally disconnecting vertex (in the obvious way). Either we get a connected tubular graph of graphs $X'_1$ or we get a disconnected space with components $X'_1, \cdots, X'_n$, where each $X'_i$ is a tubular graph of graphs. In the first case, $G = G_1 * \mathbb{Z}$. In the latter case, $G = G_1* \cdots *G_n$. Apply the algorithm again to each $X'_i$. If each $G_i$ is one-ended, we are done. Otherwise, cut again at an $X'_i$ with a many-ended $G_i$ and repeat.
This procedure terminates in polynomial time. Indeed, at each step we get tubular graphs of graphs whose total number of squares is bounded by the number of squares of $X$.  
\end{proof}

\begin{remark} \label{thm_stallings_algorithm}
We point out that we do not use Stallings' theorem on ends for our proof. In fact, our procedure yields an alternate proof of Stallings' theorem for fundamental groups of tubular graphs of graphs. Similarly, we do not assume the existence of the Grushko decomposition either. Our algorithm proves its existence for the groups under consideration.
\end{remark}

By the uniqueness of the Grushko decomposition, we have:
\begin{cor}[\cref{cor_subadditivity_intro}]\label{cor_subadditivity_sphere_theorem}
Let $X$ be a tubular graph of graphs with fundamental group $G$. Suppose that $G$ admits a free splitting as $G = A * B$. Then there exist tubular graphs of graphs $X_1$ and $X_2$ such that $A$ and $B$ are fundamental groups of $X_1$ and $X_2$ respectively. Moreover, $X_1$ and $X_2$ can be so chosen such that the total number of squares in $X_1$ and $X_2$ is bounded by the number of squares in $X$. \qed
\end{cor}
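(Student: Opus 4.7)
The plan is to reduce to \cref{thm_actual_ends_algo} and then invoke the uniqueness clause of the Grushko decomposition theorem. First I would apply \cref{thm_actual_ends_algo} to $X$ to produce a homotopy equivalent tubular graph of graphs assembled from Brady-Meier pieces $Y_1,\ldots,Y_m$ and finite graphs (contributing a free factor $F_r$) by identifying certain vertices. Cutting at those vertices realizes the Grushko decomposition
\[
G \;\cong\; \pi_1(Y_1) * \cdots * \pi_1(Y_m) * F_r,
\]
in which each $\pi_1(Y_i)$ is freely indecomposable and non-free.

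Given the hypothesis $G = A*B$, I would then use the fact that the Grushko decompositions of $A$ and $B$, concatenated, form a Grushko decomposition of $G$; by uniqueness this must agree with the one above up to reordering and conjugation. This provides a partition $\{1,\ldots,m\} = I_A \sqcup I_B$ and a splitting $r = r_A + r_B$ such that, up to conjugation,
\[
A \;\cong\; \bigl(*_{i \in I_A} \pi_1(Y_i)\bigr) * F_{r_A}, \qquad B \;\cong\; \bigl(*_{i \in I_B} \pi_1(Y_i)\bigr) * F_{r_B}.
\]

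Next I would construct $X_1$ by wedging the tubular graphs of graphs $\{Y_i\}_{i \in I_A}$ together with a subdivided rose on $r_A$ circles at a common vertex (chosen inside vertex graphs so the result inherits a tubular-graph-of-graphs structure); likewise for $X_2$. These have fundamental groups $A$ and $B$ as required. For the square count, the squares of $X_1$ and $X_2$ together are exactly the squares of $Y_1,\ldots,Y_m$ (the roses contribute none), which equals the number of squares in the output of \cref{thm_actual_ends_algo}; this in turn is at most the number of squares in $X$ because SL-moves preserve the square count by \cref{lemma_no_squares_equal}, while the hanging-tree, rudimentary-edge, and thickness-one reductions only remove squares.

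The main, and essentially only, difficulty is the bookkeeping that matches the algebraic uniqueness statement to its geometric incarnation produced by the algorithm; once \cref{thm_actual_ends_algo} is taken as a black box, the rest is repackaging. A minor technical point is checking that wedging tubular graphs of graphs yields a tubular graph of graphs, which is straightforward since every CW-vertex of such a complex lies in a vertex graph (attaching circles being simplicial), and simpliciality of vertex graphs can be restored by subdivision.
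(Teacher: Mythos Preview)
Your proposal is correct and follows exactly the approach the paper intends: the corollary is stated with a \qed and prefaced by ``By the uniqueness of the Grushko decomposition, we have,'' so the paper's proof is precisely the combination of \cref{thm_actual_ends_algo} with Grushko uniqueness that you spell out. Your added details---the partition of the Brady-Meier pieces and free rank between $A$ and $B$, the wedge construction of $X_1$ and $X_2$, and the square-count bookkeeping via \cref{lemma_no_squares_equal} and the square-nonincreasing nature of the other reductions---are all sound and simply make explicit what the paper leaves implicit.
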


\section{Whitehead graphs and separability} \label{section_separability}

The goal of this section is to give an alternative proof of Stallings' algorithm to detect whether a finite set of words in a free group is separable (\cref{thm_separability_algo}). We also give a new proof of Whitehead's cut vertex theorem (\cref{prop_stallings_separability}).

Let $F_n$ be a free group of rank $n \geq 2$ and let $W$ be a finite set of non-trivial elements of $F_n$.

\begin{defn}[\cite{stallings_whitehead_graphs}] $W$ is \emph{separable} if there exists a non-trivial free splitting of $F_n = H*K$ such that each element of $W$ is either a conjugate of an element of $H$ or a conjugate of an element of $K$.
\end{defn}

In \cite{stallings_whitehead_graphs}, Stallings developed an algorithm that detects whether $W$ is separable. We will use \cref{thm_ends_main} to obtain an alternative algorithm:

\begin{cor}[Stallings; \cref{cor_separability_intro}] \label{thm_separability_algo}
There exists an algorithm of polynomial time complexity that detects whether a given finite set of words in a finite rank free goup is separable.
\end{cor}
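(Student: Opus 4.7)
The plan is to reduce the separability problem for $(F_n, W)$ to the one-endedness problem for a specific tubular graph of graphs, then invoke \cref{thm_ends_main}.

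\emph{Step 1 (Construction).} First I would build a tubular graph of graphs $X = X(F_n, W)$ whose fundamental group is the \emph{double} $D_W$ of $F_n$ along $W$, i.e.\ the graph of groups with two vertex groups isomorphic to $F_n$ and, for each $w_i \in W$, a single edge carrying the cyclic edge group $\langle w_i \rangle$ (identifying $w_i \in F_n^{(1)}$ with $w_i \in F_n^{(2)}$). Topologically, take two subdivided $n$-petal roses $R_1, R_2$ as vertex graphs and, for each $w_i$, attach a tube whose two boundary circles are glued via simplicial immersions to the combinatorial loops reading $w_i$ in $R_1$ and $R_2$ respectively. The resulting $X$ is a tubular graph of graphs of size $O(n + \sum_i |w_i|)$ and is constructed in linear time.

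\emph{Step 2 (Key equivalence).} Next I would prove: $W$ is separable in $F_n$ if and only if $\pi_1(X) = D_W$ is not one-ended. The forward direction is by direct construction. Given a separation $F_n = H * K$ with induced partition $W = W_H \sqcup W_K$, decompose each rose as $R_H \vee R_K$ and observe that $X$ correspondingly wedges at the basepoint as $X \simeq X_H \vee X_K$, where $X_H$ is the double of $R_H$ along $W_H$ and $X_K$ is defined analogously. Both factors are non-simply-connected (since $H$ and $K$ are), so $D_W$ splits nontrivially as a free product. The reverse direction is the main obstacle; starting from a free splitting $D_W = A * B$, the approach is to exploit the natural involution $\sigma: D_W \to D_W$ exchanging the two copies of $F_n$ and fixing each $w_i$. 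After replacing the Bass--Serre tree $T$ of $A*B$ by a $\sigma$-equivariant free-splitting tree (which exists, since $\sigma$ permutes the Grushko factors of $D_W$ and fixes each $w_i$), every $w_i$ is elliptic in $T$; restricting the $T$-action to $F_n^{(1)} \leq D_W$ then yields a free-product decomposition of $F_n$ in which each $w_i$ sits (up to conjugation) in a free factor, as desired.

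\emph{Step 3 (Algorithm).} Finally I would run the algorithm of \cref{thm_ends_main} on $X$. In polynomial time it returns one of three outputs: a point, a Brady--Meier complex, or a wedge-like complex. In the first two cases, the universal cover of $X$ is one-ended (or $\pi_1(X)$ is trivial), hence by Step~2 the input $W$ is inseparable. In the third case, \cref{rmk_wedge_not_one_ended} shows $D_W$ is not one-ended, so by Step~2 the set $W$ is separable. The polynomial complexity of the overall algorithm is inherited directly from \cref{thm_ends_main}.

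The principal obstacle is the reverse direction of the equivalence in Step~2, namely forcing every distinguished element $w_i$ to be elliptic in some free-splitting tree for $D_W$. I expect this to follow from an averaging argument under the $\sigma$-involution together with standard Bass--Serre manipulations, or alternatively by invoking the uniqueness of the Grushko decomposition of $D_W$ (which is $\sigma$-invariant, so each freely indecomposable factor is $\sigma$-fixed setwise, pinning down where $w_i = \sigma(w_i)$ can live).
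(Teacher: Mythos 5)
Your Steps 1 and 3 coincide with the paper's route: build the double of the rose along $W$ as a tubular graph of graphs and run the algorithm of \cref{thm_ends_main} (the paper actually invokes \cref{thm_actual_ends_algo}), so the complexity claim is fine. The problem is the reverse implication in your Step 2, which is exactly the hard content of the corollary and which your proposal does not establish. In the paper this implication (a nontrivial free splitting of the double forces $W$ to be separable in $F_n$) is not proved from scratch at all: it is quoted as Wilton's one-endedness criterion for graphs of free groups with cyclic edge groups (\cref{thm_wilton_ends}), together with the observation that relative free indecomposability of the vertex group is the same as non-separability of $W$.

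Your involution sketch has a genuine gap at its key assertion that every $w_i$ is elliptic in a $\sigma$-equivariant free-splitting tree $T$. First, $\sigma$-equivariance does not force a $\sigma$-fixed element to be elliptic: if $w$ is hyperbolic and $\sigma(w)=w$, then $\sigma$ preserves the axis of $w$ and, having order two, must fix it pointwise; since $\sigma$ lies outside $D_W$, this is perfectly compatible with the $D_W$-edge stabilizers of $T$ being trivial (the edge stabilizers of the extension $D_W\rtimes\langle\sigma\rangle$ are merely finite), so no contradiction arises and no ``averaging'' argument is indicated. Second, the fallback via uniqueness of the Grushko decomposition is both inaccurate and insufficient: $\sigma$ only permutes the conjugacy classes of the Grushko factors (it may well interchange factors, and the factors are defined only up to conjugacy), and even full $\sigma$-invariance of the factors would say nothing about whether each $w_i$ is conjugate into a factor -- ellipticity of the $w_i$ is precisely what has to be proved, and it fails for generic elements of a free product, so some special feature of the double (this is the content of Shenitzer/Swarup-type results, or of Wilton's theorem) must be brought in. As written, then, your proof of the equivalence is circular at the decisive step; either cite \cref{thm_wilton_ends} as the paper does, or supply a genuine proof of the ellipticity claim.
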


Our method is closely related to Stallings', which uses Whitehead graphs \cite{whitehead} (defined below).
Let $H_n$ denote the orientable 3 dimensional handlebody of genus $n$. Fix an identification of $F_n$ with the fundamental group of $H_n$. A basis $B$ of $F_n$ corresponds to a system of embedded disks $D = \{d_1, \cdots, d_n\}$ such that for an element $b_i \in B$, $b_i$ is represented by a closed path in $H_n$ which starts from the chosen basepoint, intersects $d_i$ transversely and returns to the basepoint without touching any other $d_j$.
Cutting open $H_n$ along these disks results in a 3-ball with 2n disks $d_i^{\pm}$ (such that the chosen representative $b_i$ enters along $d_i^+$ and leaves along $d_i^-$). 
$W$ is represented by a set of curves in $H_n$. After cutting, the set of curves is now a set of arcs between these discs.

\begin{defn}[\cite{whitehead}] The \emph{Whitehead graph} $\Gamma_{F_n,B}(W)$ is the graph with $2n$ vertices labelled $\{b_1^{\pm}, \cdots, b_n^{\pm}\}$ and an edge between two vertices $b_i^+$ (respectively, $b_i^-$) and $b_j^+$ ($b_i^-$) for every arc coming from $W$ between the corresponding discs $d_i^+$ ($d_i^-$) and $d_j^+$ ($d_j^-$) in the cut up handlebody. 
\end{defn}

\cref{fig_whitehead_graph_example} illustrates an example when $n=2$, $B = \{b_1,b_2\}$ and $W = \{b_1b_2b_1\}$.

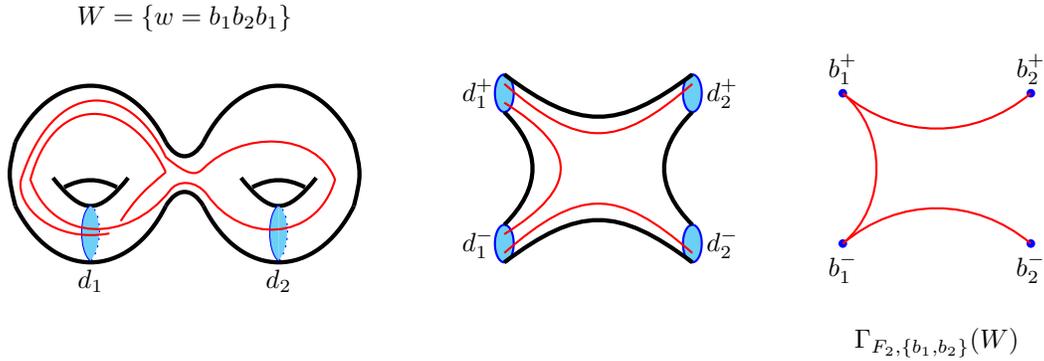
\begin{figure}
\begin{tikzpicture}[scale = 0.43]
\draw [ultra thick] (0,1).. controls (1,-1) and (3,-1).. (4,1) ;
\draw [ultra thick] (0,2.7).. controls (1,4.7) and (3,4.7).. (4,2.7) ;
\draw [ultra thick] (0,1).. controls (-.2,1.8) and (-.2,1.9).. (0,2.7);
\draw [ultra thick] (5,1).. controls (6,-1) and (8,-1).. (9,1) ;
\draw [ultra thick] (5,2.7).. controls (6,4.7) and (8,4.7).. (9,2.7) ;
\draw [ultra thick] (9,1).. controls (9.2,1.8) and (9.2,1.9).. (9,2.7);
\draw [ultra thick] (4,1).. controls (4.3,1.5) and (4.7,1.5).. (5,1);
\draw [ultra thick] (4,2.7).. controls (4.3,2.2) and (4.7,2.2).. (5,2.7);
\draw [ultra thick] (1,1.75) .. controls (1.8,.75) and (2.2,.75).. (3,1.75);
\draw [ultra thick] (6,1.75) .. controls (6.8,.75) and (7.2,.75).. (8,1.75);
\draw [ultra thick] (1.3,1.5) .. controls (1.8,1.75) and (2.2,1.75).. (2.7,1.5);
\draw [ultra thick] (6.3,1.5) .. controls (6.8,1.75) and (7.2,1.75).. (7.7,1.5);
\draw [thick, blue] (2,-.45).. controls (1.7,-0.2) and (1.7,.8).. (2,1);
\draw [thick, blue, dotted] (2,-.45).. controls (2.3,-.1) and (2.3,.8).. (2,1);
\draw [thick, blue] (7,-.45).. controls (6.7,-0.2) and (6.7,.8).. (7,1);
\draw [thick, blue, dotted] (7,-.45).. controls (7.3,-.1) and (7.3,.8).. (7,1);
\fill [cyan!50!white] (7,-.45).. controls (6.7,-0.2) and (6.7,.8).. (7,1);
\fill [cyan!50!white] (7,-.45).. controls (7.3,-.1) and (7.3,.8).. (7,1);
\fill [cyan!50!white] (2,-.45).. controls (1.7,-0.2) and (1.7,.8).. (2,1).. controls (2.3,.8) and (2.3,-.1).. (2,-.45);
\node [below] at (2,-.45) {$d_1$};
\node [below] at (7,-.45) {$d_2$};
\node at (4.5,6) {$W = \{w = b_1b_2 b_1\}$};

\draw [thick, red] (4,1.9) .. controls (3,4) and (1,4).. (0.4,1.7).. controls (1,0) and (3,0).. (4.1,1.4).. controls (4.3,1.7) and (4.8,1.7).. (5.1,1.3).. controls (6,0) and (8,0).. (8.5,1.7).. controls (8,3) and (6,3).. (5,2).. controls (4.8,1.8) and (4.6,1.8).. (4,2.3).. controls (3,4.4) and (1,4.4).. (0.15,1.7).. controls (0,1) and (1,0).. (2.5,.27);
\draw [thick, red] (2.8,.6).. controls (3.2,1.2) and (3.4,1.4).. (4,1.9);

\draw [thick, blue, fill = cyan!50!white] (13,4) ellipse (0.25 and 0.5);
\draw [thick, blue, fill = cyan!50!white] (13,0) ellipse (0.25 and 0.5);
\draw [thick, blue, fill = cyan!50!white] (18,4) ellipse (0.25 and 0.5);
\draw [thick, blue, fill = cyan!50!white] (18,-0) ellipse (0.25 and 0.5);
\draw [ultra thick] (13,3.5).. controls (14,2.5) and (14,1.5).. (13,0.5);
\draw [ultra thick] (18,3.5).. controls (17,2.5) and (17,1.5).. (18,0.5);
\draw [ultra thick] (13,4.5).. controls (15,3) and (16,3).. (18,4.5);
\draw [ultra thick] (13,-.5).. controls (15,1) and (16,1).. (18,-.5);
\draw [thick, red] (13,4.25).. controls (15,2.5) and (16,2.5).. (18,4.25);
\draw [thick, red] (13,-.25).. controls (15,1.5) and (16,1.5).. (18,-.25);
\draw [thick, red] (13,3.75).. controls (15,2.5) and (15,1.5).. (13,0.25);
\node  at (12.3,4) {$d_1^+$};
\node  at (12.3,0) {$d_1^-$};
\node  at (18.8,4) {$d_2^+$};
\node  at (18.8,0) {$d_2^-$};

\draw [fill, blue] (22,4) circle (0.1);
\draw [fill, blue] (27,4) circle (0.1);
\draw [fill, blue] (22,0) circle (0.1);
\draw [fill, blue] (27,0) circle (0.1);
\draw [thick,red] (22,4) to [out = 320, in = 220] (27,4);
\draw [thick,red] (22,0) to [out = 40, in = 140] (27,0);
\draw [thick,red] (22,4) to [out = 320, in = 40] (22,0);
\node [above] at (22,4) {$b_1^+$};
\node [below] at (22,0) {$b_1^-$};
\node [above] at (27,4) {$b_2^+$};
\node [below] at (27,0) {$b_2^-$};
\node [below] at (24.5,-2) {$\Gamma_{F_2,\{b_1,b_2\}}(W)$};

\end{tikzpicture}
\caption{A Whitehead graph}
\label{fig_whitehead_graph_example}
\end{figure}

Recall that if $Y$ is a topological space, then a \emph{cut point} $y \in Y$ is a point such that $Y \setminus \{y\}$ is not connected.
There is a well-known result about the separability of $W$. 
\begin{prop}[\cite{whitehead}] \label{prop_stallings_separability} If $W$ is separable, then for any basis $B$, the Whitehead graph $\Gamma_{F_n,B}(W)$ is either disconnected, or has a cut vertex.
\end{prop}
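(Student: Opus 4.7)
My plan is to reduce the statement to a question about vertex links in a tubular graph of graphs, and deduce it from the Brady--Meier criterion together with the existence of a free product decomposition.

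\textbf{Construction and dictionary.} From $(F_n,B,W)$ I build a tubular graph of graphs $X$ realising the double of $F_n$ along $W$: take two copies $R_1,R_2$ of the wedge of $n$ circles labelled by $B=\{b_1,\ldots,b_n\}$ (subdivided so as to be simplicial), with wedge vertices $u_1,u_2$, and for each cyclically reduced $w\in W$ of edge-length $\ell_w$ attach a combinatorial tube $C_w\times[0,1]$ via the edge-path immersions spelling $w$ in $R_1$ and $R_2$ respectively. A Bass--Serre computation identifies $\pi_1(X)$ with the fundamental group of the graph of groups with two vertices labelled $F_n$ and one edge per $w\in W$ with cyclic edge group $\langle w\rangle$ embedded identically on both sides. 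The key dictionary is: the link of $u_1$ in $X$ is a bipartite graph whose vertical vertices are the $2n$ half-edges $b_i^{\pm}$ and whose horizontal vertices are valence-two, in natural bijection with the edges of $\Gamma_{F_n,B}(W)$; suppressing each horizontal vertex recovers the Whitehead graph exactly. Combined with \cref{lemma_bm2_at_superdivision}, this shows that, provided every edge of $X$ has thickness at least two, the Whitehead graph is disconnected or has a cut vertex if and only if $X$ fails \ref{BM1} or \ref{BM2} at $u_1$ (and symmetrically at $u_2$). At every other vertex $v$, lying in the interior of a subdivided petal $b_j$, the link is a complete bipartite graph $K_{2,m_j}$, where $m_j$ is the total number of occurrences of $b_j^{\pm 1}$ across all words in $W$; such a link satisfies both Brady--Meier conditions whenever $m_j\ge 2$.

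\textbf{Deduction.} Assume $W$ is separable, so $F_n=H*K$ with each $w\in W$ conjugate into $H$ or $K$. First dispose of the boundary cases. If $m_j=0$ for some $j$, then $b_j^{\pm}$ are isolated in $\Gamma_{F_n,B}(W)$, which is therefore disconnected. If $m_j=1$ for some $j$, a degree count in the Whitehead graph shows that the unique neighbour of the degree-one vertex $b_j^{+}$ (or $b_j^{-}$) is a cut vertex. In the remaining case $m_j\ge 2$ for all $j$, every edge of $X$ has thickness at least two, and the dictionary applies verbatim. Partitioning $W=W_H\sqcup W_K$ by which factor each word conjugates into yields a nontrivial free product decomposition
\[
\pi_1(X) \;\cong\; \bigl(H \ast_{W_H} H\bigr) \;\ast\; \bigl(K \ast_{W_K} K\bigr),
\]
so by \cref{cor_freee_amalgam_ends} the universal cover $\widetilde X$ is not one-ended. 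By the contrapositive of \cref{prop_bradymeier}, some vertex of $X$ must fail \ref{BM1} or \ref{BM2}; the $K_{2,m_j}$ classification rules out non-wedge vertices, so the failure occurs at $u_1$ or $u_2$. The dictionary then delivers that $\Gamma_{F_n,B}(W)$ is disconnected or has a cut vertex.

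\textbf{Main obstacle.} The hard part is the link dictionary itself. One must carefully unpack the combinatorics of the double construction to verify the bipartite structure of $\link(u_1)$ and its relation to the Whitehead graph via suppression of horizontal valence-two vertices, and simultaneously confirm the $K_{2,m_j}$ structure at every non-wedge vertex. A secondary technicality is choosing the subdivisions of the roses and tubes fine enough that all attaching maps are simplicial immersions, without inadvertently changing the form of any vertex link.
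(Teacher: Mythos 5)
Your overall route is the same as the paper's: form the double $X$ of $(F_n,W)$, argue that $\pi_1(X)$ is not one-ended because $W$ is separable, apply the contrapositive of \cref{prop_bradymeier}, and translate the failure of \ref{BM1}/\ref{BM2} at the wedge vertex into disconnectedness or a cut vertex of the Whitehead graph via the subdivision dictionary (this is exactly \cref{lemma_links_whitehead_graphs} together with \cref{lemma_bm2_at_superdivision}); your explicit $K_{2,m_j}$ computation at non-wedge vertices plays the role of \cref{lemma_links_special_vertices_imp}, and your separate treatment of the cases $m_j\le 1$ is in fact more careful than the paper, whose appeal to \cref{prop_1ended_is_BM1} tacitly assumes every edge has thickness at least two.

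The one genuine flaw is the displayed isomorphism $\pi_1(X)\cong\bigl(H\ast_{W_H}H\bigr)\ast\bigl(K\ast_{W_K}K\bigr)$, which is false as stated. Take $F_2=\langle a\rangle\ast\langle b\rangle$ and $W=\{a,b\}$: the double has underlying graph with two vertices and two edges, and a direct Bass--Serre (or Euler characteristic) computation gives $\pi_1(X)\cong F_3$, whereas your formula gives $\mathbb{Z}\ast\mathbb{Z}\cong F_2$. You are dropping the free factor coming from the loops in the underlying graph (equivalently, the two sub-doubles are glued along \emph{two} wedge points, not one), the formula degenerates when $W_H$ or $W_K$ is empty (the ``double of $K$ along $\emptyset$'' is a disconnected space), and one must first replace each $w\in W$ by a conjugate lying in $H$ or $K$, noting that this does not change the double up to isomorphism. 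A corrected version, e.g.\ $\pi_1(X)\cong D(H;W_H)\ast D(K;W_K)\ast\mathbb{Z}$ when both parts are nonempty (with the obvious modification otherwise), or simply the observation that a free splitting of the vertex group relative to the incident edge groups refines to a nontrivial free splitting of the whole graph of groups --- the easy direction of \cref{thm_wilton_ends}, which is what the paper implicitly invokes when it asserts ``$G$ is not one-ended as $W$ is separable'' --- still yields non-one-endedness via \cref{cor_freee_amalgam_ends}. So the gap is local and repairable, and the remainder of your argument stands.
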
 
More details can be found in \cite{stallings_whitehead_graphs}. We will re-prove \cref{prop_stallings_separability} above using tubular graphs of graphs. 
Stallings constructs his algorithm to detect separability by choosing a Whitehead automorphism whenever there is a cut vertex in a Whitehead graph. Our strategy is to use the machinery of \cref{thm_ends_main} when a Whitehead graph contains a cut vertex.  

\subsection{Construction of a double} \label{subsection_double} Let $R_n$ denote an oriented rose with petals $\{a_1, \cdots, a_n\}$. Fix an identification of $F_n$ with the fundamental group  of $R_n$ such that each petal of $R_n$ in the positive direction represents a distinct element of the basis $B = \{b_1, \cdots, b_n\}$. For each element $w_j \in W = \{w_1, \cdots, w_k\}$, let $\phi_j : C_j \to R_n$ denote a cycle from the circle $C_j$ such that $\phi_j$ induces the word $w_j$ in $F_n$. We assume that $w_j$ is cyclically reduced so that $\phi_j$ is an immersion. Subdivide $R_n$ and each $C_j$ so that each $\phi_j$ is a simplicial immersion between simplicial graphs. Denote the subdivided $R_n$ by $X_s$.

We call the descendant $v$ in $X_s$ of the unique vertex of $R_n$ as the \emph{special vertex} of $X_s$.

We define the \emph{double} of $X_s$ along $W$ to be the tubular graph of graphs $X$ such that $X_s$ is a vertical graph of $X$ with exactly $k$ tubes attached to $X_s$ via the attaching maps $\phi_j$. Further, the underlying graph of $X$ consists of exactly two vertices and $k$ edges between them, where each vertex graph is isomorphic to $X_s$ and each edge graph is isomorphic to a unique $C_j$ with attaching map $\phi_j$ on both sides.

\begin{lemma} \label{lemma_links_special_vertices_imp}
The vertex link of every vertex of $X_s \subset X$ is connected with no cut vertices if and only if the link of the special vertex $v$ is connected with no cut vertices.
\end{lemma}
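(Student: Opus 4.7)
The backward direction is immediate since $v \in X_s$, so the content lies in the forward direction: assuming $\link(v)$ is connected with no cut vertex, I must show the same for $\link(w)$ for every non-special vertex $w$ of $X_s$. Any such $w$ sits in the interior of some petal $b_i$ and therefore has exactly two vertical half-edges in $X_s$, the two directions along the petal. I plan to describe $\link(w)$ inside $X$ explicitly: it has two vertical vertices $v_1, v_2$ coming from these two half-edges, together with one horizontal vertex for each preimage of $w$ under the attaching maps $\phi_j : C_j \to X_s$. Since each horizontal vertex has valence two (as the paper already records) and each $\phi_j$ is a simplicial immersion at a vertex of valence two in $X_s$, the two neighbors of any such horizontal vertex must be precisely $v_1$ and $v_2$. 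Thus $\link(w)$ consists of $v_1, v_2$ joined by $m_w$ disjoint paths of length two, where $m_w$ is the number of preimages of $w$; this graph is connected with no cut vertex if and only if $m_w \geq 2$.

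Next I would reduce $m_w$ to a petal-dependent constant. Because $\phi_j$ is an immersion of a circle and $w$ lies in the interior of petal $b_i$, every entry of $w_j$ into petal $b_i$ forces a full traversal of the petal without backtracking; consequently $m_w$ equals the total number of occurrences of $b_i^{\pm 1}$ among the cyclic words in $W$, a quantity $t_i$ depending only on $i$ (and not on the particular interior vertex $w$).

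The final step identifies these counts with the combinatorics of $\link(v)$. The link of $v$ has $2n$ vertical vertices $a_i^\pm$ corresponding to the two half-edges of each petal at $v$, while its horizontal vertices correspond to preimages of $v$ under the various $\phi_j$. Each horizontal vertex has valence two, with neighbors the two vertical vertices $a_k^\pm$ that record the transition between consecutive letters at the corresponding visit to $v$; in particular $a_i^+$ is adjacent to exactly $t_i$ horizontal vertices. Connectedness of $\link(v)$, together with $n \geq 2$, forces $t_i \geq 1$ for every $i$ (else $a_i^+$ would be isolated). The no-cut-vertex hypothesis then upgrades this to $t_i \geq 2$: if $t_i = 1$, the single horizontal vertex adjacent to $a_i^+$ would separate $a_i^+$ from the rest of $\link(v)$. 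This yields $m_w = t_i \geq 2$ for every non-special $w$, completing the argument.

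The only delicate point is the correct enumeration of adjacencies in $\link(w)$ and $\link(v)$ in terms of the cyclic words of $W$ (essentially recovering the Whitehead graph, subdivided, as $\link(v)$); once the combinatorics is set up the counting is elementary, and I expect no obstacle beyond careful bookkeeping.
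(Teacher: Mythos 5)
Your argument is correct and takes essentially the same route as the paper: both identify the link of a non-special vertex in the interior of a petal $b_i$ as two vertical vertices joined through horizontal vertices, one for each traversal of that petal, and then transfer this count to the link of the special vertex $v$. Your explicit valence bookkeeping (each vertical vertex $b_i^{\pm}$ of $\link(v)$ has valence $t_i$, and connectedness without cut vertices forces $t_i \ge 2$) is a more detailed justification of what the paper compresses into its asserted correspondence between paths in $\link(u)$ and paths in $\link(v)$.
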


\begin{proof}
The main case to consider is that of a vertex $u \neq v \in X_s$ whose vertex link is either not connected or has a cut vertex. Then there exists at most one path between the two vertical vertices ${e_1},{e_2}$ in $\link(u)$. 
There is a bijective correspondence between paths in $\link(u)$ between $e_1$ and $e_2$ and paths in $\link(v)$ between vertices that induce the petal in $R_n$ that contains $u$. Hence the result.
\end{proof}

The main lemma of this section is the following:

\begin{lemma} \label{lemma_links_whitehead_graphs} The link in $X$ of the special vertex $v$ is isomorphic as graphs to the first subdivision of the Whitehead graph $\Gamma_{F_n,B}(W)$.
\end{lemma}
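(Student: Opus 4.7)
The plan is to identify the two graphs vertex-by-vertex and then check adjacency. Both are naturally bipartite: $2n$ vertices labelled by oriented petals of the rose on one side, and valence-two vertices (one per cyclically consecutive letter-pair of the words $w_j$) on the other. I would exhibit the isomorphism by making these two identifications explicit.

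First, I would enumerate $\link(v)$. Since $v$ is the descendant in $X_s$ of the unique vertex of $R_n$, the vertical vertices of $\link(v)$ are the $2n$ half-edges at $v$, two per subdivided petal $a_i$; I would label them $b_i^+$ and $b_i^-$ in accordance with the handlebody convention, so that the incoming and outgoing half-edges of $a_i$ at $v$ correspond to $b_i^+$ and $b_i^-$. The horizontal vertices are in bijection with $\bigsqcup_j \phi_j^{-1}(v)$, since each tube $C_j \times [0,1]$ contributes one horizontal half-edge at $v$ per preimage of $v$ in $C_j$. Each such vertex has valence exactly two in $\link(v)$, as already observed in the paper.

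Next, I would identify the two vertical neighbors of each horizontal vertex. Let $\tilde v \in \phi_j^{-1}(v)$. Its two incident edges in $C_j$ map under $\phi_j$ onto two half-edges at $v$ in $X_s$, and by the definition of the link in a cube complex these are precisely the vertical neighbors of $\tilde v$ in $\link(v)$. Because $\phi_j$ is a simplicial immersion realising the cyclically reduced word $w_j$, the preimages of $v$ in $C_j$ correspond bijectively to cyclically consecutive letter-pairs of $w_j$, and the two neighbors of $\tilde v$ are the exit half-edge of the preceding letter and the entry half-edge of the following letter. The immersion condition moreover prevents any two distinct preimages from producing the same pair of neighbors, so no multi-edges need to be collapsed.

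This is exactly how $\Gamma_{F_n,B}(W)$ is built: it has vertex set $\{b_i^\pm\}$ and one edge per cyclically consecutive letter-pair in each $w_j$, joining the corresponding exit and entry vertices. Its first subdivision introduces a valence-two midpoint on each edge, naturally labelled by the associated letter-pair, hence by a preimage of $v$ in the corresponding $C_j$. The bijection $b_i^\pm \mapsto b_i^\pm$ together with this midpoint correspondence is then a graph isomorphism between $\link(v)$ and the first subdivision of $\Gamma_{F_n,B}(W)$. The one delicate point, and the main obstacle, is to fix the $\pm$ labelling so that the handlebody exit/entry assignment matches the start/end of the petals of $R_n$; once a convention is chosen, the rest is routine bookkeeping.
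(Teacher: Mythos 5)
Your argument follows the same route as the paper's (much terser) proof: identify the $2n$ vertical vertices of $\link(v)$ with the vertices $b_i^{\pm}$ of the Whitehead graph, identify the valence-two horizontal vertices with subdivision midpoints, one per cyclic letter-juncture of each $w_j$, and observe that the two vertical neighbours of a horizontal vertex are the exit/entry half-edges of the consecutive letters --- which is exactly the paper's remark that length-two paths in $\link(v)$ correspond to occurrences of letters in the words. One sentence of yours, however, is false: the immersion condition does \emph{not} prevent two distinct preimages of $v$ from having the same pair of vertical neighbours. For instance, with $w = b_1 b_2 b_1 b_2$ (cyclically reduced, hence an immersion after subdivision), the junctures after the first and third letters give two distinct horizontal vertices of $\link(v)$ joined to the same two vertical vertices. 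What the immersion actually guarantees is that the two neighbours of a \emph{single} horizontal vertex are distinct, i.e.\ that no horizontal vertex has a double edge to one vertical vertex. Fortunately the false claim is not needed: the Whitehead graph is a multigraph, with one edge per arc, that is, per occurrence of a cyclic letter-pair, so its first subdivision has one midpoint per occurrence; the bijection between midpoints and preimages of $v$ should simply be taken occurrence-by-occurrence (by position in $w_j$) rather than by letter-pair type, since the latter labelling is not well defined when a pair repeats. With that re-indexing your map is well defined and the isomorphism goes through, and the $\pm$ convention you flag at the end is indeed only bookkeeping.
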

\begin{proof}
Each petal of $R_n$ induces two vertical vertices in $\link(v)$  and hence there are $2n$ vertical vertices.
Paths of length two in $\link(v)$ between these vertices correspond to the occurrence of the respective letters in a word $w_j$ of $W$. The isomorphism is then clear.
\end{proof}

\begin{proof}[Proof of \cref{prop_stallings_separability}]
Let $X$ be the double of ($X_s,W)$, with fundamental group $G$. $G$ is not one-ended as $W$ is separable. By \cref{prop_1ended_is_BM1}, the link of some vertex of $X_s$ is either not connected or has a cut vertex. \cref{lemma_links_special_vertices_imp} and \cref{lemma_links_whitehead_graphs} then give the result.
\end{proof}

We will need the following result by Wilton \cite[Theorem 18]{wilton_one-ended_groups}:

\begin{theorem}[Wilton] \label{thm_wilton_ends}
The fundamental group of a graph of free groups with cyclic edge groups is freely indecomposable if and only if every vertex  group is freely indecomposable relative to the incident edge groups.
\end{theorem}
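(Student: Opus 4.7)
The plan is to prove both implications using Stallings' theorem on ends and Bass–Serre theory applied to the action of vertex groups on the Bass–Serre tree of an auxiliary free splitting.

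For the forward direction (the easy one), I would start from a hypothetical non-trivial free splitting $G_v = A*B$ of some vertex group such that every incident edge group conjugates into $A$ or $B$. I would then refine the ambient graph-of-groups structure by replacing the vertex $v$ with two vertices carrying $A$ and $B$ respectively, joined by a single edge with trivial edge group, and re-attach each original edge incident to $v$ to whichever of $A$ or $B$ contains (a conjugate of) its cyclic edge group; an edge that was a loop at $v$ is reattached accordingly, possibly becoming a loop at one of the new vertices or an edge between them. The resulting refined graph of groups has the same fundamental group $G$ but now contains a trivial edge group, so $G$ is an amalgam or HNN extension over the trivial subgroup. By \cref{cor_freee_amalgam_ends}, $G$ is not one-ended.

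For the converse, assume $G$ is not one-ended. Since $G$ is an iterated amalgam/HNN extension of free groups over cyclic subgroups, $G$ is torsion-free, so Stallings' theorem yields a non-trivial free splitting $G = A*B$ (or $G = A*_1$). Let $T$ be the Bass–Serre tree of this splitting; the $G$-action has trivial edge stabilizers. Each vertex group $G_v$ of the original graph-of-groups structure acts on $T$; since $G_v$ is free, Bass–Serre theory realizes $G_v$ as the fundamental group of a graph of groups with trivial edge groups, hence $G_v$ decomposes as a free product of its $T$-vertex stabilizers (each contained in a conjugate of $A$ or $B$) together with a free group. If some $G_v$ acts on $T$ without a global fixed point and every incident edge group $G_e$ fixes a vertex of $T$, this is a non-trivial free decomposition of $G_v$ relative to its incident edge groups, as required.

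The main obstacle, and the technical heart of the argument, is twofold: (i) it could happen that every vertex group $G_v$ acts with a global fixed point on $T$, and (ii) even when a non-trivial decomposition of some $G_v$ is produced, certain cyclic incident edge groups $G_e$ might act hyperbolically on $T$ and so not lie in any free factor. I would handle (i) by noting that if every vertex group and every edge group of the original graph of groups were elliptic in $T$, then the original graph of groups would refine compatibly with the $T$-structure to a graph of groups realizing the free splitting $G = A*B$, forcing a trivial edge group to appear already at the level of the original vertex/edge groups — producing the desired relative free splitting. For (ii), I would choose the free splitting $G = A*B$ to be optimal: reduced, and minimizing the number of orbits of edges of $T$ along which some incident cyclic edge group acts hyperbolically. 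A standard accessibility argument (Dunwoody for finitely presented groups, together with the fact that cyclic subgroups have well-defined translation lengths) then guarantees that for an optimal choice, every incident edge group is elliptic in $T$. Establishing this elliptization is the step I expect to require the most care; the remainder of the argument then assembles the pieces via \cref{cor_freee_amalgam_ends} and the Bass–Serre decomposition of each vertex group.
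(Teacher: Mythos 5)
First, note that the paper does not prove this statement at all: it is quoted verbatim from Wilton \cite{wilton_one-ended_groups}, so there is no in-paper argument to compare yours against; your attempt has to stand on its own. Your forward direction is fine and standard: a nontrivial free splitting of a vertex group relative to the incident edge groups refines the given graph of groups to one with a trivial edge group, and \cref{cor_freee_amalgam_ends} applies (one should just check both resulting factors are nontrivial, which follows since vertex groups of the refinement inject into $G$).

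The converse, however, has a genuine gap at exactly the point you flag as (ii), and it is not a point of ``care'' but the entire content of the theorem. Your plan is to choose a free splitting tree $T$ of $G$ minimizing some count of hyperbolic incident edge groups and to invoke ``a standard accessibility argument'' to conclude that the minimum is zero, i.e.\ that all incident cyclic edge groups can be made elliptic. No such general principle exists: a cyclic subgroup of a freely decomposable group need not be elliptic in \emph{any} free splitting --- the subgroup $\langle [a,b]\rangle \le F_2$ is hyperbolic in every free splitting of $F_2$, which is precisely why the double of $F_2$ along $[a,b]$ (a surface group) is one-ended. So hyperbolicity of the incident edge groups cannot be ruled out by Dunwoody accessibility or translation-length bookkeeping; it can only be ruled out by using the specific hypotheses that the vertex groups are free and the edge groups cyclic, which is what the classical Shenitzer--Swarup-type arguments (Whitehead's cut-vertex lemma, Dunwoody tracks, or Stallings/Bestvina--Feighn folding sequences) and Wilton's cited proof actually do. Your proposal never brings the freeness of the vertex groups into play at this step, so the minimization has no reason to terminate at zero, and the argument does not go through. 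A smaller remark on your case (i): under the intended convention that edge groups are infinite cyclic, the correct conclusion from ``all vertex groups elliptic in $T$'' is that some original edge group would have to be trivial, which is a contradiction, so case (i) simply does not occur; as you phrase it (a trivial edge group ``produces the desired relative free splitting''), the deduction is not valid --- e.g.\ an edge with trivial edge group joining two copies of $\mathbb{Z}$ gives $F_2$, yet neither vertex group splits freely relative to its incident edge groups, which also shows the statement needs the nontriviality convention on edge groups to be read correctly.
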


Note that a vertex group is freely indecomposable relative to the incident edge groups if and only if the set of words induced by the generators of these edge groups is not separable.

\begin{proof}[Proof of \cref{thm_separability_algo}] Let $W$ be the given set of words of the free group $F$. Let $X$ be the double of $(X_s,W)$ and $G$ its fundamental group. Apply the algorithm of \cref{thm_actual_ends_algo} to detect whether $G$ is one-ended. By \cref{thm_wilton_ends} above, $G$ is one-ended if and only if $W$ is not separable.
\end{proof} 

\section{The general case} \label{section_general}

We recall that by a \emph{graph of groups} $\mathcal{G}$, we mean the following data: $\Gamma$ is a connected graph, called the underlying graph. For each vertex $s$ (edge $a$) of $\Gamma$, $G_s$ ($G_a$) is a group. Further, whenever $a$ is incident to $s$, $\partial_{a,s} : G_{a} \to G_{s}$ is an injective homomorphism. Given a graph of groups as above, one can naturally associate with it a graph of spaces $\mathcal{X}_{\mathcal{G}}$ with the same underlying graph $\Gamma$ such that for each vertex $s$ (edge $a$) of $\Gamma$, $X_a$ ($X_s$) is a connected topological space such that $\pi_1(X_a) \cong G_a$ ($\pi_1 (X_s) \cong G_s$). The \emph{fundamental group} of the graph of groups $\mathcal{G}$ is the fundamental group of the geometric realisation of $\mathcal{X}_{\mathcal{G}}$ (\cref{section_graphs_spaces}). We refer the reader to \cite{scottandwall} for details. In this section, we freely switch between graphs of groups and graphs of spaces as required.

\begin{theorem}[\cref{thm_grushko_general_intro}] \label{thm_grushko_general}
There is an algorithm of polynomial time complexity which takes a graph of free groups with cyclic edge groups as input and returns the Grushko decomposition of its fundamental group.
\end{theorem}

Before going into the proof, we recall the definitions of collapse and blow up. Given a graph of groups $\mathcal{G}$, and an edge $a$ of the underlying graph $\Gamma$ with endpoints $s_1,s_2$, by a \emph{collapse} of the edge $s$ we mean a graph of groups $\mathcal{G}'$ with the following data: the underlying graph $\Gamma'$ is a quotient of $\Gamma$ with the edge $s$ collapsed to a point (vertex) $a_s$. The only change in vertex (edge) groups is the new vertex group $G_{a_s}$. $G_{a_s}$ is the fundamental group of the graph of groups whose underlying graph is the edge $s$, the vertex groups are $G_{s_1}$ and $G_{s_2}$ respectively and the edge group is $G_a$, with the injective homomorphisms from the edge group remaining the same as in $\mathcal{G}$. 

An \emph{elementary blow up} is the inverse operation of a collapse of an edge. A \emph{blow up} is an iterated process of finitely many elementary blow up operations. Note that a collapse or blow up operation does not change the fundamental group.

We will also recall the notion of a relative Grushko decomposition. Let $F$ be a finite rank free group and let $W$ be a finite set of words in $F$. The \emph{relative Grushko decomposition} of the pair $(F,W)$ is a free splitting of $F$ such that each element of $W$ conjugates into a free factor of the splitting. Further, each free factor of the splitting is itself freely indecomposable relative to $W$.

\begin{proof}
Given a graph of free groups with cyclic edge groups $\mathcal{G}$, for each vertex $s$ of the underlying graph $\Gamma$, we have a pair $(G_s,W_s)$, where $G_s$ denotes the free vertex group at $s$ and $W_s$ is the set of words induced by generators of the incident edge groups. 
The Grushko decomposition of the fundamental group of $\mathcal{G}$ can be obtained in two steps: 

\begin{enumerate}
    \item Obtain for each pair $(G_s,W_s)$ its \emph{relative Grushko decomposition} $\mathcal{G}^{Gr}_s$ in the following way: For the pair $(G_s, W_s)$, let $X_s$ denote a suitable subdivision of an oriented rose with fundamental group $G_s$ (see \cref{subsection_double}) and let $Y_s$ denote its double. Apply \cref{thm_actual_ends_algo} to obtain a tubular graph of graphs $Y^{{Gr}}_s$ which induces the Grushko decomposition of $\pi_1(Y_s)$. Note that by \cref{thm_wilton_ends}, any free splitting of $\pi_1(Y_s)$ is a free splitting of $G_s$ relative to $W_s$. Thus the obtained Grushko decomposition of $\pi_1(Y_s)$ is a double over $W_s$ of the relative Grushko decomposition $\mathcal{G}^{Gr}_s$ of $(G_s,W_s)$. 
    Let $\Gamma_s$ denote the underlying graph of $\mathcal{G}^{Gr}_s$. 
    
    \item Modify $\mathcal{G}$ by blowing-up each vertex $s$ to $\mathcal{G}^{{Gr}}_s$. Let $\mathcal{G}'$ be the new graph of groups. Note that the edge groups of $\mathcal{G}'$ are either cyclic or trivial (trivial if and only if the edge belongs to some $\Gamma_s$).
    Collapse all edges with nontrivial edge group to obtain a new graph of groups $\mathcal{G}^{{Gr}}$. By \cref{thm_wilton_ends}, the fundamental group of $\mathcal{G}$ admits no other free splitting and hence $\mathcal{G}^{{Gr}}$ is the required Grushko decomposition. 
\end{enumerate}
The running time of the above algorithm is of the order of the sum of the running times of the algorithms to obtain the relative Grushko decompositions in step (1). But each such algorithm runs in polynomial time in the number of squares of $Y_s$. The number of squares of $Y_s$ is the sum of the lengths of the words of $W_s$ in $X_s$. Thus, the algorithm runs in polynomial time in the lengths of the words defined by the incident edge groups (in the respective vertex groups) of the input graph of free groups with cyclic edge groups.
\end{proof}

\bibliographystyle{alpha}
\bibliography{Vertex_links_grushko_decomposition}

\end{document}